\documentclass[preprint,11pt]{elsarticle}
\usepackage{amsmath,amssymb,calrsfs,mathrsfs,amsthm}

\usepackage{amsrefs}
\usepackage{mathtools}
\usepackage{enumerate}
\usepackage{aliascnt}
\usepackage{latexsym}
\usepackage{extarrows}
\usepackage[symbol]{footmisc}
\usepackage{multirow}
\usepackage{graphicx}
\usepackage[colorlinks,linkcolor=blue]{hyperref}

\numberwithin{thmm}{section}

\def\NewTheorem#1#2{%
	\newaliascnt{#1}{thmm}
	\newtheorem{#1}[#1]{#2}
	\aliascntresetthe{#1}
	\expandafter\def\csname #1autorefname\endcsname{#2}
}
\numberwithin{equation}{section}

\makeatletter
\def\namedlabel#1#2{\begingroup
   \def\@currentlabel{#2}%
   \label{#1}\endgroup
}
\makeatother


\NewTheorem{lem}{Lemma}
\NewTheorem{thm}{Theorem}
\NewTheorem{cor}{Corollary}
\NewTheorem{prop}{Proposition}
\theoremstyle{definition}
\NewTheorem{defi}{Definition}
\theoremstyle{remark}
\NewTheorem{rem}{Remark}

\makeatletter
\def\namedlabel#1#2{\begingroup
   \def\@currentlabel{#2}%
   \label{#1}\endgroup
}
\makeatother
\setlength{\textwidth}{5.5in}

\begin{document}
\begin{frontmatter}
\title{Global regularity criteria for the Navier-Stokes equations based on one approximate solution}

\author{Tuan N.\ Pham}
\address{Department of Mathematics, Brigham Young University,\\275 TMCB, Provo UT 84606, USA\\tuan.pham@mathematics.byu.edu}
\date{\today}


\begin{abstract} Considering the three-dimensional incompressible Navier-Stokes equations on the whole space, we address the question: is it possible to infer the global regularity of a mild solution from a single approximate solution? Assuming a relatively simple scale-invariant relation involving the size of the approximate solution, the resolution parameter, and the initial energy, we show that the answer is affirmative for a general class of approximate solutions, including the Leray's mollified solutions. Two different treatments leading to essentially the same conclusion are presented.\end{abstract}
\begin{keyword}Navier-Stokes \sep global regularity \sep approximate solution \sep resolution parameter \sep posteriori regularity
\end{keyword}
\end{frontmatter}

\section{Introduction}
We consider the Cauchy problem for the three-dimensional incompressible Navier-Stokes equations:
\[{\rm(NSE)}:~\left\{ {\begin{array}{*{20}{rcl}}
   {{\partial _t}u - \Delta u + u \nabla u + \nabla p = 0}&~~{\rm in}&\mathbb{R}^3\times(0,\infty),  \\
   {{\rm div}\, u = 0}&~~{\rm in}&\mathbb{R}^3\times(0,\infty),  \\
   {u(\cdot,0) = {u_0}}&~~{\rm in}&\mathbb{R}^3.  \\
\end{array}} \right.\]
While the global-wellposedness of (NSE) is still not known, a variety of regularized systems obtained by mollifying the nonlinear term are known to be globally-wellposed (see e.g. \cite[Sec.\ V]{leray}, \cite[Sec.\ 6.3]{lady69}, \cite[Ch.\ III, \S 1]{temam}). Regularizations of the nonlinear term often involve a resolution parameter $\varepsilon$. Two well-known examples are:
\begin{itemize}
\item[1.] The classical Leray regularization $(u*\eta_{\varepsilon}) \nabla u$,\vspace{-.4em}
\item[2.] The regularization $P_\varepsilon (u\nabla u)$, where $P_\varepsilon$ is an orthogonal projection on $L^2(\mathbb{R}^3)$ whose Fourier multiplier is a smooth cutoff function supported on the ball $B_{2\varepsilon^{-1}}$ and equal to 1 on the ball $B_{\varepsilon^{-1}}$.\vspace{-.4em}
\end{itemize}
These approximations generate a family of global smooth approximate solutions to (NSE), which can be useful for the construction of global weak solutions. Full information about the behavior of a sequence of approximate solutions as $\varepsilon\downarrow 0$ would give useful information about the exact solution. However, in practice we only have information about finitely many approximate solutions. Let us in fact assume that we know \emph{only one} approximate solution for a certain value of $\varepsilon$. 
\begin{quotation}\vspace{-.4em}\noindent
Under what conditions can we infer global regularity for the exact solution? 
\end{quotation}
\vspace{-.4em}
The question was studied in \cite{buyang, titi2007, dashti, morosi, livio} in various settings. For example, Li \cite{buyang} considered a discretized Navier-Stokes system on a polyhedron and showed that: if the numerical solution $u_\varepsilon$ corresponding to some mesh size $\varepsilon$ is of size $M$ (the $L^\infty$-norm of $u_\varepsilon$) and $\varepsilon\lesssim\exp(-(\|u_0\|_{H^1_0\cap H^2}+1)^\alpha M^{\alpha})$, where $\alpha$ is a large number, then the exact solution is regular for all times. A global regularity criterion of this kind is generally referred as \emph{posteriori regularity criterion}, which serves as a check for an approximate/numerical solution to guarantee the existence of the exact strong solution. It differs from other well-known global regularity criteria which often require either a smallness condition on the initial data (e.g.\ \cite{kato1984, tataru}) or a subcritical/critical control on the exact solution (e.g.\ Prodi-Serrin-Ladyzhenskaya criteria, Beale-Kato-Majda criterion, \cite{sverak2003, kukavica2005, evan, titi2011}), or some special structure of the initial data (e.g.\ \cite{lady68, seregin2020, kukavica2013, shi18, jiasverak2014, chemin11}). 

In this paper, we investigate criteria for global regularity based on one continuous approximate solution on the whole space $\mathbb{R}^3$. In this setting, the problem already contains the key difficulties but is technically simpler. We consider the following regularized Navier-Stokes system:
\[\left\{ {\begin{array}{*{20}{rcl}}
   {{\partial _t}u - \Delta u + [u\nabla u]_\varepsilon + \nabla p = 0},  \\
   {{\rm div}\, u = 0},  \\
   {u(\cdot,0) = {u_0}}  \\
\end{array}} \right.\]
where $[u\nabla u]_{\varepsilon}$ is used as a common notation for two types of regularizations mentioned above. We give a simple criterion involving  the resolution parameter $\varepsilon$, the size $M$ (the $L^\infty$-norm) of the corresponding approximate solution, and the total energy $\|u_0\|^2_{L^2}$ which guarantees that the exact solution is regular globally. The overall idea is to combine the energy method, which treats temporal asymptotic behaviors, with the perturbation method, which treats finite-time behaviors. The heuristics is quite simple: first, it is well-known that every Leray-Hopf weak solution corresponding to the same initial data becomes smooth and decaying after some time $T_0=C\|u_0\|_{L^2}^4$, where $C$ is an absolute constant. Secondly, if $\varepsilon$ is sufficiently small depending on $T_0$ and $M$ then $u_\varepsilon$ stays close to the exact solution $u$ up to time $T_0$. By these arguments, it is quite conceivable that a strong solution should exist globally and be of size $2M$.

Another natural question is how large $\varepsilon$ can be in terms of the ``observable'' quantity $M$ to still guarantee global regularity of exact solutions. The scaling symmetry turns out to be useful to predict possible answers. Recall the scaling symmetry:
\[u(x,t)\to \lambda u(\lambda x,\lambda^2 t),~~~p(x,t)\to \lambda^2 p(\lambda x,\lambda^2 t),~~~u_0(x)\to \lambda u_0(\lambda x).\]
Often, the resolution parameter $\varepsilon$ can be normalized to have the same scaling as spatial length. Since both $\varepsilon$ and $M^{-1}$ are length scales of the problem, it seems reasonable to pursue the rate of $\varepsilon\lesssim M^{-1}$. However, the time-dependence nature complicates the problem. For one reason, the initial energy $\|u_0\|_{L^2}^2$ also has the same scaling as spatial length and, thus, can be considered as another length scale of the problem. Another reason is that the higher initial energy naturally requires finer resolution in order to capture complex structures of the exact solution at small scales. Our main purpose is to derive a scale-invariant condition essentially of the form $\varepsilon\lesssim M^{-1}\exp(-\|u_0\|_{L^2}^4M^2)$. This is an improvement of the condition in \cite{buyang}, although one should haste to add that our regularizations are different and, in particular, infinite-dimensional. Our criterion is also different from those in \cite{buyang, titi2007, dashti, morosi, livio} in that it is scale-invariant and does not depend on the size of the approximate solution's derivatives.

We present two different methods, one at a global scale in space and time, and one at a local scale, both leading to essentially the same result. In fact, our methods are sufficient to deal with more general approximate Navier-Stokes systems of the form:
\[{\rm(NSE)_\varepsilon}:~\left\{ {\begin{array}{*{20}{rcl}}
   {{\partial _t}u - \Delta u + u \nabla u + \nabla p = f_\varepsilon+ {\rm div}\,g_\varepsilon},  \\
   {{\rm div}\, u = 0},  \\
   {u(\cdot,0) = {u_{0\varepsilon}}}.  \\
\end{array}} \right.\]
The regularized Navier-Stokes systems mentioned above are special cases of (NSE)$_\varepsilon$. Indeed, in the first case, $g_\varepsilon=(u_\varepsilon-u_\varepsilon*\eta_\varepsilon)\otimes u_\varepsilon$, where $u_\varepsilon$ is the solution of the corresponding regularized system. In the second case, $g_\varepsilon=(Id-P_\varepsilon)(u_\varepsilon\otimes u_\varepsilon)$. In both cases, $u_{0\varepsilon}=u_0$ and $f_\varepsilon=0$. In the global approach, we use the smallness of the error terms $f_\varepsilon$ and $g_\varepsilon$ in a global sense to derive a simple proof based on perturbation method. The local approach is an application of the $\epsilon$-regularity criterion, a type of criteria for the local smoothness of a weak solution introduced by Caffarelli, Kohn and Nirenberg \cite{caff}. 
Besides serving as an alternate methodology to analyze the problem, the local approach allows the error terms to be small in a local sense. 
The main difficulty is to control the error between the approximate solution and the exact solution as it grows over time. As to be shown, this error grows at most exponentially in time.
Our main results are the following:
\begin{thm}[Global scale]\label{mainthm1}
Let $u_0\in L^\infty$ be a divergence-free initial data. For some $\varepsilon$, $M$, $T>0$, let $f_\varepsilon$, $g_\varepsilon$ be functions such that for every $\sigma\ge0$
\begin{equation}\label{globalc}\left\{ \begin{array}{*{35}{l}}
   Ff^\sigma_\varepsilon,~Gg^\sigma_\varepsilon\in C([0,T ),{{L}^{\infty }}),  \\
   \lim_{t\to 0}\,{{\left\| Ff^\sigma_\varepsilon(t) \right\|}_{{{L}^{\infty }}}}=\lim_{t\to 0}\,{{\left\| Gg^\sigma_\varepsilon(t) \right\|}_{{{L}^{\infty }}}}=0,  \\
   {{\left\| Ff^\sigma_\varepsilon \right\|}_{L^{\infty }}},~{{\left\| Gg^\sigma_\varepsilon \right\|}_{L^{\infty }}},~\|u_{0\varepsilon}-u_0\|_{L^\infty}\lesssim\varepsilon {{M}^{2}}.  \\
\end{array} \right.\end{equation}
Suppose (NSE)$_\varepsilon$ has a mild solution $u_\varepsilon$ satisfying $\|u_\varepsilon\|_{L^\infty(\mathbb{R}^3\times(0,T))}\le M$. Then there exist absolute constants $\delta_1$, $\mu_1>0$ such that if $\varepsilon\le \delta_1M^{-1}\exp(-\mu_1TM^2)$ then (NSE) has a mild solution $u$ on $(0,T)$ with $\|u\|_{L^\infty(\mathbb{R}^3\times(0,T))}\le 2M$.
\end{thm}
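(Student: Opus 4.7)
The plan is to construct the exact mild solution as $u = u_\varepsilon + v$, where the perturbation $v$ is obtained by a Picard iteration and shown to satisfy $\|v\|_{L^\infty(\mathbb{R}^3\times(0,T))} \le M$, which immediately yields $\|u\|_{L^\infty}\le 2M$. Formally subtracting (NSE)$_\varepsilon$ from (NSE) and applying the Leray projection $P$ and Duhamel's formula, $v$ should satisfy the mild equation
\[v(t) = e^{t\Delta}(u_0-u_{0\varepsilon}) - B(v,u_\varepsilon)(t) - B(u_\varepsilon,v)(t) - B(v,v)(t) - Ff^0_\varepsilon(t) - Gg^0_\varepsilon(t),\]
where $B(w,w')(t):=\int_0^t e^{(t-s)\Delta} P\operatorname{div}(w\otimes w')(s)\,ds$, and $F$, $G$ are the Duhamel convolutions acting on the time-shifted forcings $f^\sigma_\varepsilon(s):=f_\varepsilon(s+\sigma)$, $g^\sigma_\varepsilon(s):=g_\varepsilon(s+\sigma)$. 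By \eqref{globalc} at $\sigma=0$, the sum of all forcing contributions is bounded in $L^\infty(\mathbb{R}^3\times(0,T))$ by $C\varepsilon M^2$.

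Next I would run a Picard iteration for $v$ on a short time window $[0,\tau]$ with $\tau\sim M^{-2}$. The classical $L^\infty\!\to\! L^\infty$ heat kernel estimate $\|e^{(t-s)\Delta} P\operatorname{div}\|_{L^\infty\to L^\infty}\lesssim (t-s)^{-1/2}$ gives, for $\phi(t):=\|v\|_{L^\infty(\mathbb{R}^3\times(0,t))}$,
\[\phi(t)\le C\varepsilon M^2 + CM t^{1/2}\phi(t) + Ct^{1/2}\phi(t)^2.\]
Choosing $\tau\le cM^{-2}$ with $c$ small absorbs the linear-in-$v$ term; under the a priori bound $\phi\le M$ the quadratic term is dominated by the linear one, and a bootstrap (justified by continuity of $v$, inherited from the continuity hypotheses in \eqref{globalc} and the heat semigroup) closes the inequality with $\phi(\tau)\le C'\varepsilon M^2$.

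The final step is to concatenate this argument over $N=\lceil T/\tau\rceil\sim TM^2$ windows $[t_k,t_{k+1}]$. Restarting at $t_k$ replaces the Duhamel forcings by their $\sigma=t_k$ translates, whose $L^\infty$-norms are again bounded by $C\varepsilon M^2$ by \eqref{globalc}, and introduces the inherited data $v(t_k)$. Running the same Picard scheme on each window gives a recursion of the form $a_{k+1}\le \rho\bigl(a_k+C\varepsilon M^2\bigr)$ for $a_k := \|v(t_k)\|_{L^\infty}$ and some fixed $\rho>1$, so after $N$ steps,
\[\phi(T)\lesssim \varepsilon M^2\,\rho^N\lesssim \varepsilon M^2\exp(\mu_1 TM^2)\]
for some absolute $\mu_1$. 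Hence $\varepsilon\le\delta_1 M^{-1}\exp(-\mu_1 TM^2)$ forces $\phi(T)\le M$, giving the claim.

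The main technical obstacle is the bootstrap within each window: the Picard inequality closes only under the a priori assumption $\phi\le M$, so one needs a continuity-in-time argument (using both the semigroup's $L^\infty$-continuity in $t$ and the vanishing-at-$0$ part of \eqref{globalc}) to propagate this bound without circularity. A secondary point is bookkeeping across restarts: verifying that the $\sigma=t_k$ translates of $f_\varepsilon$ and $g_\varepsilon$ really do appear in the restarted Duhamel formula is a routine change of variables, but it must be done carefully so that the uniform-in-$\sigma$ hypothesis \eqref{globalc} can be invoked on every window.
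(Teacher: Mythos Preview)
Your proposal is correct and follows essentially the same route as the paper: write the mild equation for $v=u-u_\varepsilon$, use the kernel bound $\|e^{(t-s)\Delta}\mathbb{P}\,\mathrm{div}\|_{L^\infty\to L^\infty}\lesssim (t-s)^{-1/2}$ on short windows of length $\tau\sim M^{-2}$, run a continuity/bootstrap argument on each window, and concatenate over $\sim TM^2$ windows so that the error grows at worst geometrically, forcing the stated exponential condition on $\varepsilon$. The paper packages your ``bootstrap within each window'' into a short elementary lemma (if $\varphi$ is continuous, nondecreasing, and $\varphi(t)\le\varphi(0)+\alpha+\beta\varphi(t)+\gamma\varphi(t)^2$ with $\beta<1/2$ and $\varphi(0)+\alpha<1/(16\gamma)$, then $\varphi(\tau)<4(\varphi(0)+\alpha)$), which plays exactly the role of your a~priori bound $\phi\le M$; and it phrases the existence of $u$ on each new window via the classical short-time $L^\infty$ result rather than by Picard-constructing $v$ directly, but this is only a presentational difference.
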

\begin{thm}[Local scale]\label{mainthm2}
Let $q_1$, $q_2$, $\nu_i$, $\sigma_i$, $\lambda_i$, for $i=1,2,3$, be such that
\begin{equation}\label{condensedcond}
\left\{ \begin{matrix}
   5/2<{{q}_{1}}\le \infty ,\ 5<{{q}_{2}}\le \infty ,\\
   {{\nu }_{1}}+{{\nu }_{2}}-{{\nu }_{3}} = -3+5/{{q}_{1}},  \\
   {{\sigma }_{1}}+{{\sigma }_{2}}-{{\sigma }_{3}} = -2+5/{{q}_{2}} , \\
      {{\lambda }_{1}}+{{\lambda }_{2}}-{{\lambda }_{3}} = 1/2 , \\
   {{\nu }_{1}},\,{{\sigma }_{1}},\,{{\lambda }_{1}}>0,\,{{\nu }_{i}},\,{{\sigma }_{i}},\,{{\lambda }_{i}}\ge 0.   \\
\end{matrix} \right.\end{equation}
Let $u_0\in L^\infty$ be a divergence-free initial data. For some $\varepsilon,\,M>0$, let $f_\varepsilon$ and $g_\varepsilon$ be functions satisfying
\begin{eqnarray}\label{localc1}
\|f_\varepsilon\|_{L^{q_1}(B_r(x)\times(t-r^2,t))}&\lesssim&\varepsilon^{\nu_1}r^{\nu_2}M^{\nu_3},\\
\label{localc2}\|g_\varepsilon\|_{L^{q_2}(B_r(x)\times(t-r^2,t))}&\lesssim&\varepsilon^{\sigma_1}r^{\sigma_2}M^{\sigma_3},\\
\label{localc3}\|u_{0\varepsilon}-u_0\|_{L^{2}(B_r(x))}&\lesssim&\varepsilon^{\lambda_1}r^{\lambda_2}M^{\lambda_3}
\end{eqnarray}
for all $x\in\mathbb{R}^3$, $r>0$, $t>r^2$. Suppose (NSE)$_\varepsilon$ has a weak solution $u_\varepsilon$ satisfying 
\begin{gather}\label{uecond}
\nonumber u_\varepsilon \in C([0,T),L^\infty),\ \ \ \nabla u_\varepsilon \in L^2((0,T),L^2_{\text{uloc}}(\mathbb{R}^3)),\\
\|u_\varepsilon\|_{L^\infty(\mathbb{R}^3\times(0,T))}\le M.
\end{gather} 
Then there exist constants $\delta_2$, $\mu_2>0$ depending on $\nu_1,\sigma_1,\lambda_1,q_1,q_2$ such that if $\varepsilon\le \delta_2M^{-1}\exp(-\mu_2TM^2)$ then (NSE) has a mild solution $u$ on $(0,T)$ with $\|u\|_{L^\infty(\mathbb{R}^3\times(0,T))}\le C_{q_1,q_2}M$.
\end{thm}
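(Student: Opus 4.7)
The plan is to study the error $w := u - u_\varepsilon$ between a putative exact solution $u$ of (NSE) and the given approximate solution $u_\varepsilon$. Subtracting (NSE)$_\varepsilon$ from (NSE) shows that $w$ solves the perturbed system
\[
\partial_t w - \Delta w + (u_\varepsilon + w)\cdot\nabla w + w\cdot\nabla u_\varepsilon + \nabla\tilde p = -f_\varepsilon - {\rm div}\, g_\varepsilon, \quad {\rm div}\, w = 0,
\]
with $w(\cdot,0) = u_0 - u_{0\varepsilon}$. Since $\|u\|_{L^\infty}\leq M + \|w\|_{L^\infty}$, the task reduces to bounding $\|w\|_{L^\infty} \lesssim M$. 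The first main step is a local-in-space energy estimate in $L^2_{\text{uloc}}$: testing against $w\phi^2$ for a compactly supported cutoff $\phi$ and using $\|u_\varepsilon\|_{L^\infty}\leq M$, the linear stretching $w\cdot\nabla u_\varepsilon$ is handled by integration by parts, producing $M^2\|w\phi\|_{L^2}^2$ plus a fraction of the dissipation. Hypotheses \eqref{localc1}--\eqref{localc3} bound the initial error and forcings by a constant times $\varepsilon^{2\alpha}$ with $\alpha = \min(\nu_1,\sigma_1,\lambda_1)>0$. After a Calder\'on-Zygmund pressure decomposition in the local-energy framework, Gronwall's inequality yields a bound of the form
\[
\sup_{x_0\in\mathbb{R}^3}\|w(t)\|_{L^2(B_1(x_0))}^2 + \int_0^t \sup_{x_0}\|\nabla w(s)\|_{L^2(B_1(x_0))}^2\,ds \lesssim \varepsilon^{2\alpha}\exp(C t M^2).
\]

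The second step is a perturbative $\epsilon$-regularity argument of Caffarelli-Kohn-Nirenberg type, applied to $w$. At a parabolic cylinder $Q_r(x,t)=B_r(x)\times(t-r^2,t)$ with $r\sim M^{-1}$, the relevant scale-invariant quantities governing local boundedness are $r^{-2}\int_{Q_r}(|w|^3+|\tilde p|^{3/2})$, together with the forcing moments $r^{2-5/q_1}\|f_\varepsilon\|_{L^{q_1}(Q_r)}$ and $r^{1-5/q_2}\|g_\varepsilon\|_{L^{q_2}(Q_r)}$, and the drift/stretching contributions from $u_\varepsilon$, which are controlled by $rM\leq 1$. The algebraic relations in \eqref{condensedcond} are designed precisely so that each such quantity, when combined with the energy estimate and hypotheses \eqref{localc1}--\eqref{localc3}, takes the form $\varepsilon^{\alpha'}M^{\beta'}\exp(CTM^2)$ with a strictly positive power of $\varepsilon$. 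Under the smallness condition $\varepsilon\leq \delta_2 M^{-1}\exp(-\mu_2 TM^2)$, each such quantity drops below the universal CKN threshold $\epsilon_*$, and the perturbative regularity criterion yields $\|w\|_{L^\infty(Q_{r/2})}\lesssim r^{-1}\sim M$.

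The proof concludes by a standard continuation argument: a short-time mild solution $u$ exists by Kato theory, and letting $T^*\in(0,T]$ be maximal with $\|u\|_{L^\infty(\mathbb{R}^3\times(0,T^*))}\leq C_{q_1,q_2}M$, the estimate above improves that bound on $(0,T^*)$, precluding blow-up and forcing $T^*=T$. The principal obstacle lies in reconciling the exponential factor $\exp(CTM^2)$ from the Gronwall step with the scale-invariant thresholds of CKN: because no $L^\infty$ control on $\nabla u_\varepsilon$ is available, the stretching term $w\cdot\nabla u_\varepsilon$ must be treated via integration by parts, and this is the structural source of both the $M^2\|w\|^2$ contribution and the resulting exponential growth --- hence of the exponential form of the smallness condition on $\varepsilon$. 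A secondary technical point is the non-locality of the pressure, which in the $L^2_{\text{uloc}}$ setting generates tail contributions that must be controlled uniformly in the center of the localization ball; these produce only lower-order corrections that are absorbed in the Gronwall step.
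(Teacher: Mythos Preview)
Your plan follows the same overall strategy as the paper: write $w=u-u_\varepsilon$ as a suitable weak solution of a generalized Navier--Stokes system with bounded drift $u_\varepsilon$, propagate a local-in-space energy bound for $w$ forward in time with exponential rate $\sim M^2$, feed the result into a perturbative $\epsilon$-regularity criterion at scale $r\sim M^{-1}$ to recover $\|w\|_{L^\infty}\lesssim M$, and close by continuation. The identification of the scale-invariant quantities and the role of the exponents in \eqref{condensedcond} is correct.

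The one notable organizational difference lies in how the local energy is propagated. You use the bootstrap bound $\|u\|_{L^\infty}\le C_{q_1,q_2}M$ to \emph{linearize} the cubic contribution from $w\cdot\nabla w$ (and from the $w\otimes w$ part of the pressure) in the local-energy inequality, and then run a continuous Gronwall. The paper instead derives a genuinely nonlinear algebraic inequality $e(t)\le e(c^+)+\alpha e(t)^{1/2}+\beta e(t)+\gamma e(t)^{3/2}$ (\autoref{prop25191}, \autoref{prop25192}) which uses only $\|u_\varepsilon\|_{L^\infty}\le M$ and \emph{not} the bootstrap on $u$; the growth is then controlled by a discrete iteration over time steps of length $\sim M^{-2}$ via an elementary ODE-type lemma (\autoref{prop25194}) showing the local energy at most quadruples per step, so that the exponential factor emerges from $\sim TM^2$ iterations. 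Your route is arguably more direct; the paper's keeps the energy step decoupled from any a~priori $L^\infty$ control on the exact solution, at the cost of the extra nonlinear lemma. One minor bookkeeping point: you state the energy estimate on unit balls $B_1(x_0)$ but apply $\epsilon$-regularity at scale $r\sim M^{-1}$; the paper runs both at the same scale $r=\kappa/M$ throughout, which makes all constants manifestly scale-invariant and is what you will need when you actually match the energy bound to the CKN threshold.
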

For the two classical regularized systems mentioned above, the condition \eqref{globalc} is satisfied. It should be noticed that if $\|u_0\|_{L^\infty}\le M$ then the exact solution remains bounded up to some time $\gtrsim M^{-2}$ without any further condition. Starting from this time, conditions \eqref{condensedcond}-\eqref{localc3} are satisfied. See \autoref{mol1} and \autoref{mol2} for detail justification. We remark that all quantitative relations in the above theorems are scaling invariant.

In the case $u_0\in L^2$, every Leray's weak solution eventually becomes regular after $T_0$ and decays as $\|u(t)\|_{L^\infty}\lesssim \|u_0\|_{L^2}t^{-3/4}$. This result is due to Leray \cite[Para.\ 34]{leray}. Therefore, if the mild solution to (NSE) blows up in finite time, it must blow up before this time. One obtains a consequence of the main theorems as follows.

\begin{cor}\label{globalsol}
Let $u_0\in L^2\cap L^\infty$. Suppose that the hypotheses of \autoref{mainthm1} (or \autoref{mainthm2}) hold for $T=T_0$. 
There exist absolute constants $C_1$, $C_2>0$ such that if $\|u_\varepsilon\|_{L^\infty(\mathbb{R}^3\times(0,T_0))}\le M$ for some \[\varepsilon\le C_1M^{-1}\exp\left(-C_2 \|u_0\|_{L^2}^4M^2\right)\] then $u$ is a global solution and $\|u\|_{L^\infty(\mathbb{R}^3\times(0,\infty))}\le \tilde{C}M$, where the constant $\tilde{C}=2$ (or $\max\{2,\,C_{q_1,q_2}$\}, respectively).
\end{cor}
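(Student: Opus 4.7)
The plan is to apply the main theorem on the finite time window $(0,T_0)$, where $T_0 := C_L\|u_0\|_{L^2}^4$ is the Leray eventual-regularity time, and then paste on the smooth Leray tail on $[T_0,\infty)$.

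I would set $T = T_0$ in \autoref{mainthm1} (respectively \autoref{mainthm2}). With this choice, the smallness condition $\varepsilon \le \delta_i M^{-1}\exp(-\mu_i T M^2)$ in the theorem becomes $\varepsilon \le \delta_i M^{-1}\exp(-\mu_i C_L \|u_0\|_{L^2}^4 M^2)$, which is exactly the hypothesis of the corollary with $C_1 := \delta_i$ and $C_2 := \mu_i C_L$. The theorem then produces a mild solution $u$ on $\mathbb{R}^3\times(0,T_0)$ satisfying $\|u\|_{L^\infty}\le \tilde C M$.

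To extend past $T_0$, I would invoke Leray's global existence theorem: since $u_0\in L^2$, there is a global Leray-Hopf weak solution $\bar u$ with the same initial data. Because the mild $u$ built above is bounded on $(0,T_0)$, it is a strong (in fact smooth) solution there, so weak-strong uniqueness forces $\bar u \equiv u$ on that slab. By Leray's eventual-regularity result quoted in the paper, $\bar u$ is smooth on $[T_0,\infty)$ with $\|\bar u(t)\|_{L^\infty} \lesssim \|u_0\|_{L^2} t^{-3/4}$, hence $\bar u$ is itself a mild solution past $T_0$. Concatenating these two pieces delivers the asserted global mild solution $u$.

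For the global $L^\infty$ bound I would combine $\|u\|_{L^\infty}\le \tilde C M$ on $(0,T_0)$ with Leray's decay on $[T_0,\infty)$. The main obstacle, essentially bookkeeping rather than conceptual, is that Leray's tail bound peaks at $\lesssim \|u_0\|_{L^2} T_0^{-3/4} \sim \|u_0\|_{L^2}^{-2}$ at $t=T_0$, and this value is not a priori controlled by $\tilde C M$. I would resolve this by observing that $\|u(T_0)\|_{L^\infty}\le \tilde C M$ holds by continuity from the first step, then running a short local-in-time continuation from the bounded datum $u(T_0)$ to propagate the bound $\tilde C M$ across the initial interval past $T_0$, and appealing to Leray's quantitative decay only on the later regime. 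After possibly enlarging $\tilde C$ by a universal factor to absorb the bridging constant, the global bound $\|u\|_{L^\infty(\mathbb{R}^3\times(0,\infty))}\le \tilde C M$ follows, with $\tilde C$ being the $2$ (or $\max\{2,C_{q_1,q_2}\}$) claimed in the statement.
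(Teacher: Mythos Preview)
Your strategy for obtaining global existence and smoothness is correct and essentially matches the paper: apply the main theorem on $(0,T_0)$, identify the mild solution with the Leray--Hopf weak solution via weak--strong uniqueness, and invoke Leray's eventual regularity on $[T_0,\infty)$.

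The gap is in your argument for the quantitative global bound $\|u\|_{L^\infty}\le\tilde C M$. Leray's decay $\|u(t)\|_{L^\infty}\lesssim\|u_0\|_{L^2}t^{-3/4}$ gives, at $t=T_0=C\|u_0\|_{L^2}^4$, a value of order $\|u_0\|_{L^2}^{-2}$, which bears no a priori relation to $M$ --- it can be arbitrarily larger. Your proposed fix (short local continuation from the datum $u(T_0)$, then Leray's decay ``on the later regime'') does not close: a single continuation step from a datum of size $\tilde C M$ buys only an interval of length $\sim M^{-2}$, at the end of which Leray's decay is still $\sim\|u_0\|_{L^2}^{-2}$; iterating the continuation doubles the bound at each step. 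No enlargement of $\tilde C$ by a \emph{universal} factor can absorb a quantity that depends on $\|u_0\|_{L^2}$ rather than on $M$.

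The paper closes this with a different ingredient you did not invoke: Leray's scale-invariant small-data criterion \cite[Para.~21]{leray}, which asserts that if $\|w_0\|_{L^\infty}^2\|w_0\|_{L^2}$ is sufficiently small then the solution emanating from $w_0$ satisfies $\|u(t)\|_{L^\infty}\le 2\|w_0\|_{L^\infty}$ for all $t>0$. One enlarges the constant $C$ in $T_0=C\|u_0\|_{L^2}^4$ so that Leray's decay forces $\|u(T_0)\|_{L^\infty}^2\|u_0\|_{L^2}$ to be small; combined with the energy inequality $\|u(T_0)\|_{L^2}\le\|u_0\|_{L^2}$, the criterion applies with $w_0=u(T_0)$ and yields $\|u(t)\|_{L^\infty}\le 2\|u(T_0)\|_{L^\infty}$ for all $t\ge T_0$, from which the stated global bound in terms of $M$ follows.
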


\noindent\textbf{Notations.} We will denote by $C$ a positive absolute constant, by $C_{\alpha,\lambda,...}$ or $C(\alpha,\gamma,...)$ a positive constant depending on $\alpha$, $\lambda$, and so on. Nonessential constants $C$ may vary from line to line. We write $X\lesssim Y$ if $X\le CY$ for some positive absolute constant $C$. In addition, the following notations are used:
\begin{itemize}
\item[] $B_r(x)=\{y\in\mathbb{R}^3:\,|y-x|<r\}$,\ \ \ $B_r=B_r(0),$
\item[] $Q_{r,\theta}(z)=B_r(x)\times(t-\theta r^2,t)$, ~~$z=(x,t)$,
\item[] $Q_r(z)=B_r(x)\times(t-r^2,t)$,~~ $z=(x,t)$,
\item[] $Q_r=Q_r(0,0)$,\ \ \ $Q_{r,\theta}=Q_{r,\theta}(0,0)$,
\item[] $u\otimes v = (u_iv_j)$,\ \ \ $a:b=a_{ij}b_{ij}$,\ \ \ div\,$g=(g_{ij,j})$ with Einstein summation convention,
\item[] $f^\sigma(x,t)=f(x,t+\sigma)$,\ \ \ $[f]_B=\frac{1}{|B|}\int_Bfdx$,
\item[] $\mathbb{P}$ denotes Leray projection onto divergence-free fields on $\mathbb{R}^3$,
\item[] $Ff=\int_0^t e^{(t-s)\Delta}\mathbb{P}f(s)ds$,\ \ \  $Gg=\int_0^t e^{(t-s)\Delta}\mathbb{P}{\rm div\,}g(s)ds$,\ \ \ $B(u,v)=-G(u\otimes v)$,
\item[] $\|u\|_{C^\alpha_{\text{par}}(Q)}=\|u\|_{L^\infty(Q)}+[u]_{C^\alpha_{\text{par}}(Q)}$,\ \ \ \ $[u]_{C^\alpha_{\text{par}}(Q)}=\underset{(x',t'),(x,t)\in Q}{\mathop{\sup }}\,\frac{|u(x',t')-u(x,t)|}{{{\left( |x'-x{{|}^{2}}+|t'-t| \right)}^{\alpha /2}}}$.
\end{itemize}

\section{At a global scale}
Our goal in this section is to give a proof of \autoref{mainthm2}. We recall a useful abstract lemma for Banach spaces:
\begin{lem}\label{abstractlem}
	Let $\mathcal{X}$ be a Banach space, $L:\mathcal{X}\to\mathcal{X}$ be a continuous linear map with $\|L\|\le\lambda<1$, and $B:\mathcal{X}\times \mathcal{X}\to \mathcal{X}$ be a continuous bilinear map with $\|B\|\le\gamma$. For $a\in \mathcal{X}$, consider the fixed point problem
$x=a+Lx+B(x,x)$. If $4\lambda\|a\|<(1-\lambda)^2$ and $0<r_1<r_2$ are two roots of the equation $r=\|a\|+\lambda r+\gamma r^2$, then the problem has a unique solution $\bar{x}$ in the ball $\{x:~\|x\|< r_2\}$. Moreover, $\|\bar{x}\|\le r_1$. It is given as the limit of the sequence $(x_n)$ with $x_0=0$, $x_{n+1}=a+B(x_n,x_n)$.
\end{lem}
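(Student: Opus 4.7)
The plan is a Banach-style fixed point argument for the map $T(x)=a+Lx+B(x,x)$ restricted to the closed ball $\overline{B}(0,r_1)$. Writing the scalar equation $r=\|a\|+\lambda r+\gamma r^2$ as $\gamma r^2-(1-\lambda)r+\|a\|=0$, the hypothesis guarantees two distinct positive roots $r_1<r_2$ with $r_1+r_2=(1-\lambda)/\gamma$ and $r_1r_2=\|a\|/\gamma$; these two Vieta identities will carry essentially all of the bookkeeping.

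First I would verify that $T$ sends $\overline{B}(0,r_1)$ into itself: for $\|x\|\le r_1$, the operator bounds give
\[
\|T(x)\|\le \|a\|+\lambda\|x\|+\gamma\|x\|^2\le \|a\|+\lambda r_1+\gamma r_1^2=r_1,
\]
using that $r_1$ solves the quadratic. Next, the bilinearity identity $B(x,x)-B(y,y)=B(x-y,x)+B(y,x-y)$ combined with $\|L\|\le\lambda$ gives
\[
\|T(x)-T(y)\|\le\bigl(\lambda+\gamma(\|x\|+\|y\|)\bigr)\|x-y\|\le(\lambda+2\gamma r_1)\|x-y\|
\]
on that ball. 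Since $r_1<r_2$ and $r_1+r_2=(1-\lambda)/\gamma$, one has $2\gamma r_1<1-\lambda$, so $T$ is a strict contraction. The Banach fixed point theorem then produces a unique fixed point $\bar{x}\in\overline{B}(0,r_1)$, realized as the limit of the Picard iteration starting from $x_0=0$.

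For the stronger uniqueness claim on the open ball $\{\|x\|<r_2\}$, I would use the fixed point equation itself: any solution $x$ satisfies $\|x\|\le\|a\|+\lambda\|x\|+\gamma\|x\|^2$, i.e., $\gamma\|x\|^2-(1-\lambda)\|x\|+\|a\|\ge 0$. Since the roots of this polynomial are exactly $r_1$ and $r_2$, its sign forces $\|x\|\le r_1$ or $\|x\|\ge r_2$; the constraint $\|x\|<r_2$ rules out the second alternative and places $x$ back in $\overline{B}(0,r_1)$, where uniqueness has already been obtained. This upgrade is the only subtle step: a naive Lipschitz comparison of two candidate solutions of size up to $r_2$ would yield a putative constant as large as $\lambda+2\gamma r_2$, which can exceed $1$ and therefore cannot close the argument on the larger ball directly. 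Using the fixed-point equation to forbid the annulus $(r_1,r_2)$ circumvents this and lets the contraction on $\overline{B}(0,r_1)$ do all the work.
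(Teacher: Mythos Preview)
Your argument is correct and is the standard one. The paper does not actually supply a proof of this lemma: it simply refers the reader to \cite[Lemma~A.1]{gallagher2003}, so there is no in-paper proof to compare against. Your contraction on $\overline{B}(0,r_1)$, together with the quadratic sign analysis to force any fixed point in $\{\|x\|<r_2\}$ back into $\overline{B}(0,r_1)$, is precisely how such results are proved in the literature.

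Two remarks on the statement itself rather than your reasoning. First, the hypothesis $4\lambda\|a\|<(1-\lambda)^2$ is almost certainly a misprint for $4\gamma\|a\|<(1-\lambda)^2$, the positivity of the discriminant of $\gamma r^2-(1-\lambda)r+\|a\|$; that is the condition you implicitly use when asserting two distinct positive roots. Second, the iteration recorded in the statement is $x_{n+1}=a+B(x_n,x_n)$, which omits the $Lx_n$ term and is therefore \emph{not} the Picard iteration for $T$; when $L\ne 0$ its limit (if any) would solve $x=a+B(x,x)$, not $x=a+Lx+B(x,x)$. This is again presumably a typo (the lemma is only invoked with $L=0$ afterwards), but your phrase ``the Picard iteration starting from $x_0=0$'' refers to the iteration of $T$, which is the correct one.
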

We refer the readers to \cite[Lemma A.1]{gallagher2003} for the proof. A mild solution of the Navier-Stokes equation with right hand side $f+{\rm div\,} g$ is defined by Picard's iteration from the equation
\begin{equation}\label{37191}u=e^{t\Delta}u_0+Ff+Gg+B(u,u).\end{equation}
Suppose $u_0\in L^\infty$ and $Ff$, $Gg\in C([0,\infty),L^\infty)$. The mild solution, if exists on an interval $(0,T)$, belongs to the class $C([0,T),L^\infty)$; see e.g.\ \cite[Sec.\ 3]{pooley}. \autoref{abstractlem} together with basic estimates on the convolution kernel of the Stokes operator gives the following result due to Leray \cite[Para.\ 19]{leray}: 
\begin{rem}\label{constantc}
For $u_0\in L^\infty(\mathbb{R}^3)$, $f=0$ and $g=0$, there exists an absolute constant $C_0>0$ such that for $T=C_0\|u_0\|_{L^\infty}^{-2}$ the equation \eqref{37191} has a solution $u\in L^\infty(\mathbb{R}^3\times(0,T))$ and $\|u\|_{L^\infty}\le 2\|u_0\|_{L^\infty}$.
\end{rem}

\subsection{Proof of \autoref{mainthm1}}
\noindent We will estimate the growth in $L^\infty$-norm of $v=u-u_\varepsilon$ after each time step of size $\tau\lesssim M^{-2}$. As to be shown, $v$ is roughly speaking increased by at most 4 times after each time step, resulting in the hypothetical exponential relation between $\epsilon$ and $M$. 
\begin{lem}\label{prop28191}
Let $I=[0,\tau]$. Consider a continuous nondecreasing function $\varphi:I\to\mathbb{R}$ satisfying
\[\varphi(t)\le\varphi(0)+\alpha+\beta\varphi(t)+\gamma\varphi(t)^2~~~~\forall~ t\in I,\]
for some constants $\alpha$, $\beta$, $\gamma>0$. Suppose $\beta<\frac{1}{2}$ and $\varphi(0)+\alpha<\frac{1}{16\gamma}$. Then $\varphi(\tau)<4(\varphi(0)+\alpha)$.
\end{lem}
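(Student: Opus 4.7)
The plan is to view the inequality as a quadratic constraint in $\varphi(t)$, trap $\varphi$ on one side of the small root by continuity, and then estimate that small root explicitly.

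First, I would rewrite the hypothesis in the form
\[
\gamma\varphi(t)^2 - (1-\beta)\varphi(t) + (\varphi(0)+\alpha) \ge 0 \qquad \forall\, t\in I.
\]
Set $A := \varphi(0)+\alpha$. The discriminant is $(1-\beta)^2 - 4\gamma A$, and the assumptions give $(1-\beta)^2 > 1/4$ and $4\gamma A < 1/4$, so the discriminant is strictly positive. Hence the quadratic in $\varphi$ has two real roots
\[
r_\pm = \frac{(1-\beta) \pm \sqrt{(1-\beta)^2 - 4\gamma A}}{2\gamma}, \qquad 0 < r_- < r_+,
\]
and the inequality says that at every $t\in I$, either $\varphi(t)\le r_-$ or $\varphi(t)\ge r_+$.

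Next, I would check that $\varphi(0)\le r_-$ and then use continuity to propagate. Rationalizing,
\[
r_- \;=\; \frac{2A}{(1-\beta)+\sqrt{(1-\beta)^2-4\gamma A}} \;\ge\; \frac{2A}{2(1-\beta)} \;=\; \frac{A}{1-\beta} \;\ge\; A \;\ge\; \varphi(0).
\]
Now consider the closed set $S = \{t\in[0,\tau] : \varphi(t)\le r_-\}$, which contains $0$. If $S\neq[0,\tau]$, let $t_0=\sup S$; by closedness $\varphi(t_0)\le r_-$, and by definition of $t_0$ there are $t_n\downarrow t_0$ with $\varphi(t_n)\ge r_+$. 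Continuity of $\varphi$ then gives $\varphi(t_0)\ge r_+ > r_-$, a contradiction. (Monotonicity of $\varphi$ is not even needed here, though it makes the picture cleaner.) Therefore $\varphi(\tau)\le r_-$.

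Finally I would conclude by bounding $r_-$ from above. Using $(1-\beta)+\sqrt{(1-\beta)^2-4\gamma A} \ge 1-\beta > 1/2$ in the rationalized expression,
\[
\varphi(\tau) \;\le\; r_- \;=\; \frac{2A}{(1-\beta)+\sqrt{(1-\beta)^2-4\gamma A}} \;<\; \frac{2A}{1/2} \;=\; 4A \;=\; 4(\varphi(0)+\alpha),
\]
which is the desired conclusion. There is no real obstacle; the only subtle point is the continuity-propagation argument that keeps $\varphi$ stuck below $r_-$, and the constant $16$ in the hypothesis $\varphi(0)+\alpha < 1/(16\gamma)$ is exactly what is needed so that the discriminant stays positive and simultaneously $r_- < 4A$.
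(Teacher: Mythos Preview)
Your proof is correct. The approach differs from the paper's in execution, though both rest on the same quadratic constraint and continuity. The paper argues by a direct contradiction: setting $\lambda=\varphi(0)+\alpha$, it assumes $\varphi(\tau)\ge 4\lambda$, invokes the intermediate value theorem to find $t$ with $\varphi(t)=4\lambda$, plugs this value into the inequality, and obtains $4\lambda<\lambda+\tfrac12(4\lambda)+\gamma(4\lambda)^2$, i.e.\ $1<16\gamma\lambda$, contradicting the hypothesis. Your argument instead identifies the two roots $r_\pm$ of the associated quadratic, verifies $\varphi(0)\le r_-$, and uses a connectedness argument to trap $\varphi$ below $r_-$ for all time, then bounds $r_-<4A$. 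Your version yields the slightly sharper conclusion $\varphi(\tau)\le r_-$ and makes the barrier explicit; the paper's version is shorter and avoids computing the roots altogether. One minor quibble: in your closing remark, the bound $r_-<4A$ actually follows from $\beta<1/2$ alone (via $(1-\beta)+\sqrt{\cdots}>1-\beta>1/2$), so the hypothesis $A<1/(16\gamma)$ is used only to keep the discriminant positive, not ``simultaneously'' for the bound on $r_-$.
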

\begin{proof}
Put $\lambda=\varphi(0)+\alpha$. Suppose by contradiction that $\varphi(\tau)\ge 4\lambda> \varphi(0)$. By the continuity of $\varphi$, there exists $t\in I$ such that $\varphi(t)=4\lambda$. Then
\[4\lambda =\varphi (t)\le \varphi (0)+\alpha +\beta \varphi (t)+\gamma \varphi {{(t)}^{2}}<\lambda +\frac{1}{2}(4\lambda )+\gamma {{(4\lambda )}^{2}},\]
which implies $1<16\gamma\lambda$. This is a contradiction.
\end{proof}
\begin{proof}[Proof \autoref{mainthm1}]
Let $C_0>0$ be the absolute constant identified in \autoref{constantc}.
Put $\tau=\frac{\theta^2}{M^2}$, where $\theta<\min\{1,\sqrt{C_0}/2\}$ is a small absolute constant to be chosen later. 
For $\sigma\ge 0$, denote by $u^\sigma$, $u_\varepsilon^\sigma$, $f^\sigma_\varepsilon$, $g^\sigma_\varepsilon$ the time-shifted version of  $u$, $u_\varepsilon$, $f_\varepsilon$, $g_\varepsilon$, respectively, i.e. $u^\sigma(x,t)=u(x,t+\sigma)$,\ldots
If $u$ is bounded on $[\sigma,\sigma+\tau]$ for some $\sigma>0$ then the difference $v^\sigma=u^\sigma-u^\sigma_\varepsilon$
solves
\begin{equation}
\label{28193}v^\sigma(t)=e^{t\Delta}v^\sigma(0)-F f^\sigma_\varepsilon-G g^\sigma_\varepsilon+B(v^\sigma,{{u}^\sigma_{\varepsilon }})+B({{u}^\sigma_{\varepsilon }},v^\sigma)+B(v^\sigma,v^\sigma).\end{equation}
Taking the $L^\infty_x$-norm of \eqref{28193} leads to
\begin{equation}\label{28194}{{\left\| v^\sigma(t) \right\|}_{L^{\infty }}}\le {{\left\| v^\sigma({0}) \right\|}_{L^{\infty }}}+C\varepsilon {{M}^{2}}+CM\int_{{0}}^{t}{\frac{{{\left\| v^\sigma(s) \right\|}_{L^{\infty }}}}{\sqrt{t-s}}ds}+C\int_{{0}}^{t}{\frac{\left\| v^\sigma(s) \right\|_{L^{\infty }}^{2}}{\sqrt{t-s}}ds}.\end{equation}
Because the function $\varphi_\sigma(s)=\|v^\sigma\|_{L^\infty(\mathbb{R}^3\times[0,s])}$ is a continuous and nondecreasing on $[0,\tau]$, \eqref{28194} implies
\begin{equation}\label{28195}\varphi_\sigma(t)\le\varphi_\sigma(0)+\alpha+\beta\varphi_\sigma(t)+\gamma\varphi_\sigma(t)^2~~~~\forall~ t\in [0,\tau],\end{equation}
where $\alpha=C\varepsilon M^2$, $\beta=C\theta$, $\gamma=\frac{C\theta}{M}$. We choose $\theta$ such that $\beta=C\theta<\frac{1}{2}$ and that $K=\frac{1}{\theta^2}TM^2 +1$ is an integer. Assume the relation $\varepsilon\le \delta_1M^{-1}\exp(-\mu_1TM^2)$ where $\delta_1$ and $\mu_1$ are to be determined. Take $\mu_1=\frac{4}{\theta^2}$ and choose $\delta_1$ sufficiently small such that ${{4}^{2K}}\alpha ={{4}^{2K}}(C\varepsilon {{M}^{2}})<M$ and $\alpha \gamma =C\varepsilon M<{{4}^{-2K-3}}$.
For $k\ge 1$, denote $t_k=(k-1)\tau$. Note that $t_K=T$. We show by induction on $2\le j\le K$ that
\begin{enumerate}[~~~~~~~(a)]
\item 
$\|u\|_{L^\infty({\mathbb{R}^3\times[0,t_j]})}\le 2M$,
\item $\varphi_0(t_j)<4^{2j}\alpha$.
\end{enumerate}
Note that $\varphi_0(0)= \|u_{0\varepsilon}-u_0\|_{L^\infty}\lesssim \varepsilon M^2$. By scaling $\alpha$ by a constant factor if necessary, one can assume $\varphi_0(0)\le\alpha$. Because $\beta<\frac{1}{2}$ an $\alpha\gamma<\frac{1}{16}$, one can apply \autoref{prop28191} for $\varphi=\varphi_0$. Then $\varphi_0(\tau)<5\alpha$, and thus (a) and (b) are true for $j=2$. Suppose that (a) and (b) hold for $2\le j=k<K$. By our choice of $\tau$, $u$ stays bounded on the interval $[t_k,t_k+\tau]=[t_k,t_{k+1}]$. 
Moreover,
\[{{\varphi }_{{{t}_{k}}}}(0)+\alpha \le {{\varphi }_{0}}({{t}_{k}})+\alpha <{{4}^{2k}}\alpha +\alpha <{{4}^{2k+1}}\alpha <\frac{1}{16\gamma }.\]
Applying \eqref{28195} for $\sigma=t_k$ and \autoref{prop28191} for $\varphi=\varphi_{t_k}$, we conclude that ${{\varphi }_{{{t}_{k}}}}(\tau)\le 4({{\varphi }_{{{t}_{k}}}}(0)+\alpha )<{{4}^{2k+2}}\alpha$. Thus, (b) is true for $j=k+1$. Moreover,
\[{{\left\| u({{t}_{k+1}}) \right\|}_{{{L}^{\infty }}}}\le {{\left\| {{u}_{\varepsilon }}({{t}_{k+1}}) \right\|}_{{{L}^{\infty }}}}+{{\left\| v({{t}_{k+1}}) \right\|}_{{{L}^{\infty }}}}\le M+{{\varphi }_{0}}({{t}_{k+1}})\le M+4^K\alpha< 2M.\]
Therefore, (a) is true for $j=k+1$. 
\end{proof}


The following arguments are sufficient to prove \autoref{globalsol}.

By \cite[Para.\ 21]{leray}, if the dimensionless quantity $\|u_0\|_{L^\infty}^2\|u_0\|_{L^2}$ is sufficient small then $\|u(t)\|_{L^\infty}\le 2\|u_0\|_{L^\infty}$ for all $t>0$. On the other hand, $\|u(t)\|_{L^\infty}\lesssim \|u_0\|_{L^2}t^{-3/4}$ for all $t\ge T_0=C\|u_0\|_{L^2}^4$. One can increase $C$ if necessary such that $\|u(t)\|_{L^\infty}^2\|u_0\|_{L^2}$ is small for all $t\ge T_0$. Then $\|u(T_0)\|_{L^\infty}^2\|u(T_0)\|_{L^2}$ is small thanks to the energy inequality $\|u(t)\|_{L^2}\le \|u_0\|_{L^2}$. Therefore, $\|u(t)\|_{L^\infty}\le 2\|u(T_0)\|_{L^\infty}\le 2M$ for all $t\ge T_0$. 


\section{At a local scale}
Our goal in this section is to give a proof of \autoref{mainthm2}. We illustrate our method heuristically as follows:

If $\|u(t)\|_{L^\infty}\le{2\tilde{C}M}$ for some $t$ then $u$ is regular at least until some time $t+\delta$. On the time interval $I=(t,t+\delta)$, the function $v=u-u_\epsilon$ belongs to a class of weak solutions to a generalized Navier--Stokes system. We use an $\epsilon$-regularity criterion for $v$ to show that $v$ is bounded by $\tilde{C}M$ on $I$. Then $\|u\|_{L^\infty(\mathbb{R}^3\times I)}\le \|u_\varepsilon\|_{L^\infty}+\|v\|_{L^\infty(\mathbb{R}^3\times I)}\le (1+\tilde{C})M\le 2\tilde{C}M$. We then continue this argument with $t$ being replaced by $t+\delta$.

We will formulate an $\epsilon$-regularity criterion for suitable weak solutions (to be defined) for the generalized Navier--Stokes equations. Key to the proof of \autoref{mainthm2} is to control the growth of the local energy of $v$ after each time step of size $\lesssim M^{-2}$. Roughly speaking, this energy is increased by at most 4 times after each time step. 

\subsection{Suitable weak solutions to a generalized NSE}
We define suitable weak solutions to the generalized Navier-Stokes system
\[{\rm(GNSE):}\left\{ \begin{matrix}
{{\partial }_{t}}u-\Delta u+\operatorname{div}(a\otimes u+u\otimes a+u\,\otimes u)+\nabla p=f+\operatorname{div}g,  \\
\operatorname{div}u=0  \\
\end{matrix} \right.\ \ \ (x,t)\in \mathcal{O}\]
as follows.
\begin{defi}\label{generalizedsol}
Let $\mathcal{O}=D\times I=D\times(c,d)$ be an open subset of $\mathbb{R}^3\times\mathbb{R}$. Let $f\in L^{q_1}(\mathcal{O})$, $g\in L^{q_2}(\mathcal{O})$, $a\in L^m(\mathcal{O})$, div\,$a=0$ with $q_1>5/2$, $q_2>5$, $m>5$. A pair of functions $(u,p)$ is said to be suitable weak solution to (GNSE) if the following conditions are satisfied.
\begin{enumerate}[(i)]
	\item $u\in L^\infty_t L^2_x\cap L^2_t\dot{H}^1_x(\mathcal{O}')$ and $p\in L^{3/2}(\mathcal{O}')$ for any bounded set $\mathcal{O}'\subset\mathcal{O}$. Moreover, 
	\[\underset{{{z}}\in \mathcal{O}}{\mathop{\sup }}\,\left( {{\left\| u \right\|}_{L_{t}^{\infty }L_{x}^{2}(\mathcal{O}\cap {{Q}_{1}}({{z}}))}}+{{\left\| \nabla u \right\|}_{{{L}^{2}}(\mathcal{O}\cap {{Q}_{1}}({{z}}))}} \right)<\infty. \]
	\item They satisfy the equations $\partial_tu-\Delta u+{\rm div}(a\otimes u+u\otimes a+u\otimes u)+\nabla p=f+{\rm div}\,g$ and div\,$u=0$ in sense of distribution on $\mathcal{O}$; that is, for each $\psi\in C^\infty_0(\mathcal{O})$
\begin{eqnarray*}\int_{{D}}{u(x,t)\psi (x,t)dx}=\int_{c }^{t}{\int_{{D}}{\left[ u\left( {{\partial }_{s}}\psi +\Delta \psi  \right)+(a\otimes u+u\otimes a+u\otimes u)\nabla \psi   \right.}}\\
\left. +p\operatorname{div}\psi+f\psi -g\nabla \psi  \right]dxds,
\end{eqnarray*}
\[\int_{{{\mathbb{R}}^{3}}}{u(x,t)\cdot \nabla \psi (x,t)dx}=0~~~~\forall\,t\in I.\]
	\item The following local energy inequality holds for all nonnegative $\psi\in C^\infty_0(\mathcal{O})$. 
	\begin{eqnarray}
\nonumber	\int_{\mathcal{O}}{|\nabla u{{|}^{2}}\psi dxds}\le \int_{\mathcal{O}}{\left[ \frac{|u{{|}^{2}}}{2}({{\partial }_{t}}\psi +\Delta \psi )+\left( \frac{|u{{|}^{2}}}{2}+p \right)u\nabla \psi dxds \right.}\\
\nonumber	+\frac{|u{{|}^{2}}}{2}a\nabla \psi +u\otimes a:\psi \nabla u+u\otimes u:a\nabla \psi \\
\label{323191}	\left. +uf\psi +g:\psi \nabla u+g:u\otimes \nabla \psi  \right]dxds
\end{eqnarray}
	
\end{enumerate}
\end{defi}
\begin{rem}
From Part (ii) of \autoref{generalizedsol}, it can be seen that for any $\psi\in C^\infty_0(D\times I)$ the map $t\mapsto \int_{{D}}{u(x,t)\psi (x,t)dx}$ is continuous on $I$. By choosing $\psi(x,t)=\chi(t)\phi(x)$ where $\chi\in C^\infty_0(I)$ and $\phi\in C^\infty_0(D)$, one concludes that the map $t\mapsto \int_{{{\mathbb{R}}^{3}}}{u(x,t)\phi (x)dx}$ is continuous on $I$. Denote by $(L^2(D),w)$ the space $L^2(D)$ equipped with weak topology. Together with the fact that $u\in L^\infty_tL^2_x(D\times I)$, the map $u:I\to (L^2(D),w)$ is continuous. This observation leads to the following property.
\end{rem}

\begin{lem}\label{prop131192}
Let $u$ be a suitable weak solution on $D\times I=D\times (c,d)$. Let $\psi\in C(D\times\mathbb{R})$, $\psi\ge 0$, compactly supported on $D\times \mathbb{R}$. Then the map $\xi:I\to\mathbb{R}$
\[\xi(t)=\int_{D}{\frac{|u(x,t){{|}^{2}}}{2}\psi (x,t)dx}\]
is lower semi-continuous, i.e. $\underset{s\to t}{\mathop{\lim \inf }}\,\xi(s)\ge \xi(t)$ for every $t\in I$.
\end{lem}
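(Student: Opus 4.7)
The plan is to combine the weak $L^2$-continuity of $u$ in time (noted in the remark preceding the lemma) with the lower semi-continuity of the $L^2$-norm under weak convergence. Because $\psi\ge 0$ and $\psi\in C(D\times\mathbb{R})$, the square root $\psi^{1/2}$ is continuous on $D\times\mathbb{R}$, so one may write $\xi(t)=\tfrac{1}{2}\|\psi(\cdot,t)^{1/2}u(\cdot,t)\|_{L^2(D)}^2$. Fix $t_0\in I$ and a sequence $t_n\to t_0$. It will suffice to prove
\[\psi(\cdot,t_n)^{1/2}u(\cdot,t_n)\;\rightharpoonup\;\psi(\cdot,t_0)^{1/2}u(\cdot,t_0)\quad \text{weakly in }L^2(D),\]
because weak lower semi-continuity of the $L^2$-norm then yields $\liminf_n \xi(t_n)\ge \xi(t_0)$.

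The first step is to upgrade the weak continuity from the preceding remark, which only provides continuity of $t\mapsto \int u(x,t)\phi(x)\,dx$ for $\phi\in C_0^\infty(D)$. Combined with the uniform local bound $\sup_{t\in I'}\|u(\cdot,t)\|_{L^2(D')}<\infty$ coming from part (i) of \autoref{generalizedsol}, a standard density argument extends this continuity to every $\phi\in L^2(D')$, for any bounded $D'\subset D$ and relatively compact $I'\subset I$. Fix such a $D'$ containing the spatial projection of $\mathrm{supp}\,\psi$.

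Next comes the splitting. For arbitrary $h\in L^2(D')$, write
\[\begin{aligned}\int_{D'} h\,\psi(\cdot,t_n)^{1/2}u(\cdot,t_n)\,dx = {}&\int_{D'}u(\cdot,t_n)\bigl[h\,\psi(\cdot,t_0)^{1/2}\bigr]\,dx\\
{}&+\int_{D'}u(\cdot,t_n)\,h\,\bigl[\psi(\cdot,t_n)^{1/2}-\psi(\cdot,t_0)^{1/2}\bigr]\,dx.\end{aligned}\]
The first summand converges to $\int h\,\psi(\cdot,t_0)^{1/2}u(\cdot,t_0)\,dx$ by the extended weak continuity (applied to the test function $h\,\psi(\cdot,t_0)^{1/2}\in L^2(D')$). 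The second summand is controlled by
\[\|h\|_{L^2(D')}\cdot\sup_n\|u(\cdot,t_n)\|_{L^2(D')}\cdot\|\psi(\cdot,t_n)^{1/2}-\psi(\cdot,t_0)^{1/2}\|_{L^\infty(D')},\]
which vanishes because $\psi^{1/2}$, continuous on the compact set $\mathrm{supp}\,\psi$, is uniformly continuous there.

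The whole argument is essentially soft analysis, so the main obstacle is mere bookkeeping: ensuring the weak continuity is extended from $C_0^\infty$ to $L^2$ test functions via the density argument above, and using the assumed continuity of $\psi$ (not mere measurability) to pass the square root inside a uniform limit. Everything else follows from standard principles.
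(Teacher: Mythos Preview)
Your proposal is correct and follows precisely the route the paper intends: the remark immediately preceding the lemma establishes the weak $L^2$-continuity of $t\mapsto u(\cdot,t)$, and the paper simply says ``This observation leads to the following property'' without spelling out the proof. Your argument---writing $\xi(t)=\tfrac12\|\psi^{1/2}u\|_{L^2}^2$, showing $\psi(\cdot,t_n)^{1/2}u(\cdot,t_n)\rightharpoonup\psi(\cdot,t_0)^{1/2}u(\cdot,t_0)$ via the splitting, and invoking weak lower semi-continuity of the norm---is exactly the standard way to fill in this gap, and the bookkeeping (density extension from $C_0^\infty$ to $L^2$ test functions, uniform continuity of $\psi^{1/2}$ on compact sets) is handled cleanly.
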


Although we only assume a local $L^2$-bound on the gradient of the solution in the definition of suitable weak solutions, a similar bound on the solution itself can be inferred from \noindent\autoref{prop131192}:
\begin{equation}\label{326191}
\int_D\frac{|u(x,t)|^2}{2}\psi(x,t)dx+\int_c^t\int_D|\nabla u|^2\psi dxds\le \textrm{RHS}\eqref{323191}
\end{equation}
for every function $0\le\psi\in C^\infty_0(D\times I)$ and $t\in I=(c,d)$. A simple proof of this fact can be obtained by standard cut-off argument (see also \cite[p.\ 788]{caff}): for fixed $t\in I$, apply the test functions $\psi_\delta(x,s)=\psi(x,s)\chi\left(\frac{t-s}{\delta}\right)$ where $\chi:\mathbb{R}\to\mathbb{R}$ is a smooth nondecreasing function such that $\chi(s)=0$ for $s\le 0$, and $\chi(s)=1$ for $s\ge 1$. Then let $\delta\to 0$.

\begin{lem}\label{prop21191}
Let $u$ be a suitable weak solution on $\mathbb{R}^3\times I$ where $I=(c,\,d)$. Let $\phi\in C^\infty_0(\mathbb{R}^3)$, $\phi\ge 0$. We have the following statements.
\begin{enumerate}[(i)]
\item $\xi(t)+\rho(t)\le \xi(t_0^+)+\rho(t_0)+k(t_0,t)$ for any $c\le t_0<t<d$, where
\begin{eqnarray*}
\xi(t)&=&\int_{\mathbb{R}^3}{\frac{|u(x,t){{|}^{2}}}{2}\phi (x)dx},\\
\rho(t)&=&\int_c^t \int_D|\nabla u|^2\phi dxds,\\
k(t_1,t_2)&=& \int_{t_1}^{t_2}{\int_{\mathbb{R}^3}{\left[ \frac{|u{{|}^{2}}}{2}\Delta \phi+\left( \frac{|u{{|}^{2}}}{2}+p \right)u\nabla \phi+ \right.}}\\
\nonumber&&+\frac{|u{{|}^{2}}}{2}a\nabla \phi +u\otimes a:\phi \nabla u+u\otimes u:a\nabla \phi +\\
\label{defk}&&\left. +uf\phi +g:\phi \nabla u+g:u\otimes \nabla \phi  \right]dxds,\\
\xi(t^+)&=&{\lim\sup}_{s\to t^+}\,\xi(s).
\end{eqnarray*}
\item Define $\tilde{e}:I\to\mathbb{R}$, \[{\tilde{e}}(t)=\underset{s\in (c ,t),\,y\in {{\mathbb{R}}^{3}}}{\mathop{\sup }}\,\int_{{{\mathbb{R}}^{3}}}{\frac{|u(x,s){{|}^{2}}}{2}\phi (x-y)dx}.\]
Then $\tilde{e}$ is continuous on $I$.
\item Define $e:I\to\mathbb{R}$, \[e(t)=\underset{s\in (c ,t),\,y\in {{\mathbb{R}}^{3}}}{\mathop{\sup }}\,\left( \int_{{{\mathbb{R}}^{3}}}{\frac{|u(x,s){{|}^{2}}}{2}\phi (x-y)}dx+\int_{c }^{s}{\int_{{{\mathbb{R}}^{3}}}{|\nabla u(x,\tau ){{|}^{2}}\phi (x-y)dxd\tau }} \right).\]
Then $e$ is continuous on $I$. Moreover, if $\lim_{s\to c^+}\|u(s)-u(c)\|_{L^2(B)}= 0$ for every bounded set $B\subset\mathbb{R}^3$ then the function $e$ can be extended to a continuous function on $I\cup\{c\}$ with \[e(c )\coloneqq\underset{y\in {{\mathbb{R}}^{3}}}{\mathop{\sup }}\,\int_{{{\mathbb{R}}^{3}}}{\frac{|u(x,c ){{|}^{2}}}{2}\phi (x-y)dx}.\]
\end{enumerate}
\end{lem}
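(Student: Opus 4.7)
The plan is to obtain (i) from the integrated local energy inequality \eqref{326191}, and then deduce (ii) and (iii) using (i) together with the lower semi-continuity of $\xi_y(s) := \int \frac{|u(x,s)|^2}{2}\phi(x-y)\,dx$ supplied by \autoref{prop131192}.

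For (i), I would test \eqref{326191} at the fixed time $t$ against $\psi_\varepsilon(x,s) = \phi(x)\eta_\varepsilon(s)$, where $\eta_\varepsilon \in C^\infty_0(I)$ vanishes on $[c,t_0]$, increases smoothly from $0$ to $1$ on $[t_0,t_0+\varepsilon]$, equals $1$ on $[t_0+\varepsilon,t]$, and is cut off smoothly before $d$. Sending $\varepsilon \to 0$: the boundary term yields $\xi(t)$; the dissipation term yields $\rho(t)-\rho(t_0)$ by dominated convergence; the RHS interior terms without time derivative assemble into $k(t_0,t)$; and the $\partial_s$ piece $\int\!\int \tfrac{|u|^2}{2}\phi\,\eta_\varepsilon'(s)\,dxds$ is a mollifier-style average of $\xi$ concentrated on $(t_0,t_0+\varepsilon)$ whose $\limsup$ is bounded by $\xi(t_0^+)$. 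Rearranging gives the claimed inequality; the case $t_0=c$ is identical, with $\eta_\varepsilon$ transitioning inside $(c,c+\varepsilon)$.

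For (ii), left-continuity of $\tilde e$ is immediate from its sup structure: if $\xi_{y}(s)>\tilde e(t)-\delta$ for some $s<t$ and $y$, then $\tilde e(t')\ge \xi_y(s)$ for every $t'\in(s,t)$, and monotonicity closes the argument. For right-continuity, fix small $\eta>0$ and apply (i) with $t_0=t-\eta$ to obtain, for every $y$ and every $s\in(t-\eta,d)$, $\xi_y(s) \le \xi_y((t-\eta)^+) + k_y(t-\eta,s) \le \tilde e(t) + k_y(t-\eta,s)$, where the second inequality uses $\xi_y(\tau)\le \tilde e(t)$ for all $\tau\in(c,t)$, hence $\xi_y((t-\eta)^+) \le \tilde e(t)$. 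Combined with $\xi_y(t)\le \tilde e(t)$ (by lsc of $\xi_y$ applied from the left), taking supremum over $y$ and $s\in[t,t+\eta)$ yields $\tilde e(t+\eta) \le \tilde e(t) + \sup_y k_y(t-\eta,t+\eta)$. The correction vanishes as $\eta\to 0$ uniformly in $y$ thanks to the compact support of $\phi$ combined with the locally-uniform-in-space integrability of the integrands of $k_y$.

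The continuity of $e$ on $I$ in (iii) is proved identically, since $\rho_y(s)$ is continuous and nondecreasing in $s$, so $\xi_y+\rho_y$ remains lsc and the chain of inequalities goes through unchanged. For the extension to $t=c$, I would apply (i) with $t_0=c$ to get $\xi_y(s)+\rho_y(s) \le \xi_y(c^+) + k_y(c,s) = \xi_y(c) + k_y(c,s)$ for every $y$ and $s\in I$, where the equality uses the hypothesis $\|u(s)-u(c)\|_{L^2(B)}\to 0$ to identify $\xi_y(c^+)=\xi_y(c)$. Taking supremum over $y$ and $s\in(c,t)$ gives $e(t)\le e(c)+\sup_{y,\,s\in(c,t)}k_y(c,s)$, and the correction vanishes as $t\to c^+$ by the same uniform integrability; combined with $\liminf_{t\to c^+}e(t)\ge e(c)$ (immediate from the definition at fixed $y$ and $\rho_y(c)=0$), this closes the continuity at $c$. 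The main technical obstacle is the $\varepsilon\to 0$ step in (i): because $\xi$ is only lsc and can jump upward from the right, the boundary term at $t_0$ must yield $\xi(t_0^+)=\limsup_{s\to t_0^+}\xi(s)$ rather than $\xi(t_0)$, and this limsup is exactly what appears in the statement.
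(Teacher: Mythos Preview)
Your proposal is correct and follows essentially the same approach as the paper: both prove (i) by testing \eqref{326191} against $\phi(x)$ times a time cutoff transitioning on $(t_0,t_0+\varepsilon)$, and both prove (ii)--(iii) by combining the monotone sup structure with Part (i) and the lower semicontinuity from \autoref{prop131192}. The one point the paper makes more explicit than your sketch is the uniform-in-$y$ quantitative bound $k_y(s,s')\le A|s'-s|^\theta$ (obtained via H\"older using the conditions of \autoref{generalizedsol}(i)), which is exactly the ``locally-uniform integrability'' you invoke for the vanishing of $\sup_y k_y$.
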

\begin{proof}
(i) The estimate is proved by applying \eqref{326191} with the test function $\psi_\delta(x,s)=\chi_\delta(s)\phi(x)$, where $\chi_\delta$ is a nonnegative function supported on $(t_0,t+\delta)$ and equal to 1 on $(t_0+\delta,t)$, and then letting $\delta\to 0$. 

(ii) For arbitrary $y\in\mathbb{R}^3$, put $\phi_y(x)=\phi(x-y)$. Let $\xi_y$, $\rho_y$, $k_y$ be defined similarly to $\xi$, $\rho$, $k$, except that $\phi$ is replaced by $\phi_y$. Let $t_0\in I$ and $(t_n)$ be an increasing sequence converging to $t_0$. 
By \autoref{prop131192}, $\liminf\,{\tilde{e}}({{t}_{n}})\ge\liminf\,{{\xi }_{y}}({{t}_{n}})\ge \xi_y(t_0)$.
%
Also note that $\tilde{e}$ is nondecreasing. Thus, $\tilde{e}$ is left-continuous at $t_0$. By Part (i), \begin{equation}\label{112193}\xi_y(s')\le \xi_y(s^+)+k_y(s,s')~~~~\forall s,\,s'\in I,~ s<s'\end{equation}
By Part (i) of \autoref{generalizedsol} and H\"{o}lder inequality, $k_y(s,s')\le A|s'-s|^\theta$ for all $|s'-s|<1$, where $\theta=\theta(m,q_1,q_2)\in(0,1)$ and $A$ is independent of $y$. Applying \eqref{112193} for $s<t_0<s'$ and letting $s\to t_0^-$,
\begin{equation*}\xi_y(s')\le \underset{s\to t_{0}^{-}}{\mathop{\lim \inf }}\,(\xi_y({{s}^{+}})+k_y(s,s'))\le \tilde{e}({{t}_{0}})+k_y({{t}_{0}},s')\ \ \ \ \forall s'>{{t}_{0}}.\end{equation*}
By taking supremum both sides over $s'\in[t_0,t)$
and $y\in\mathbb{R}^3$, we obtain $\tilde{e}(t)\le \tilde{e}({{t}_{0}})+A|t_0-t|^\theta$. Thus, 
$\tilde{e}$ is right-continuous at $t_0$.

(iii) By Part (i),
\[{{\xi }_{y}}(t)+{{\rho }_{y}}(t)\le {{\xi }_{y}}(c^{+})+{{\rho }_{y}}(c)+A(t-c)^\theta~~~\forall\,c<t<d.\]
One can use arguments similar to Part (ii) to show that $e$ is continuous at $c$. Since $u(t)\to u(c)$ in $L^2(B_r)$ for any $R>0$,
\[{{\xi }_{y}}({{c }^{+}})=\int_{{{\mathbb{R}}^{3}}}{\frac{|u(x,c ){{|}^{2}}}{2}{{\phi }_{y}}(x)dx}.\]
Thus, $e(c )\le \underset{s\to {{c }^{+}}}{\mathop{\lim \inf }}\,e(s)\le e(t)\le e(c )+A{{(t-c )}^{\theta }}$ for all $t>c$.
\end{proof}

\subsection{Local energy estimate for suitable weak solutions}

Let $\phi_r:\mathbb{R}^3\to\mathbb{R}$ be a nonnegative smooth function supported on $B_{2r}$, equal to 1 in $B_r$, with derivatives $|\nabla\phi_r|\le Cr^{-1}$ and $|\nabla^2\phi_r|\le Cr^{-2}$. One can take, for example, $\phi_r(x)=\phi_1(x/r)$ where $\phi_1$ is a nonnegative smooth function supported on $B_2$ and equal to 1 on $B_1$. Denote
\begin{equation}\label{funcphi}\phi_{r,y}(x)=\phi_r(x-y)=\phi_1\left(\frac{x-y}{r}\right).\end{equation}
\begin{prop}\label{prop25191}
Let $(u,p)$ be a suitable weak solution to the generalized Navier-Stokes equations on $\mathbb{R}^3\times I$, where $I=(c,c+r^2)$, with $a\in L^m$, $f\in L^{q_1}$, $g\in L^{q_2}$ with $m\ge 5$, $q_1\ge 3/2$, $q_2\ge 2$. For $t\in I$, denote
\[e_r(t)=\underset{s\in (c ,t),y\in {{\mathbb{R}}^{3}}}{\mathop{\sup }}\,\left( \int_{{{\mathbb{R}}^{3}}}{\frac{|u(x,s){{|}^{2}}}{2}{{\phi }_{r,y}}(x)}dx+\int_{c }^{s}{\int_{{{\mathbb{R}}^{3}}}{|\nabla u(x,s' ){{|}^{2}}{{\phi }_{r,y}}(x)dxds' }} \right).\]
Then \[e_r(t)\le e_r(c^+)+\alpha_r({t})e_r(t)^{1/2}+\beta_r({t})e_r(t)+\gamma_r({t})e_r(t)^{3/2}.\]
Moreover, there exists a function $\bar{p}=\bar{p}(y,s)$ such that
\begin{eqnarray*}\underset{y\in {{\mathbb{R}}^{3}}}{\mathop{\sup }}\,\int_{c }^{t}{\int_{{{\mathbb{R}}^{3}}}{|p(x,s)-\bar{p}(y,s){{|}^{\frac{3}{2}}}\phi_{r,y}dxds}}\lesssim \kappa _{1,r}^{\frac{3}{2}}{{r}^{\frac{3}{2}}}{{(\tau{{r}^{3}})}^{1-\frac{3}{2{{q}_{1}}}}}+\kappa _{2,r}^{\frac{3}{2}}{{(\tau{{r}^{3}})}^{1-\frac{3}{2{{q}_{2}}}}}\\
+\kappa _{0,r}^{\frac{3}{2}}\left( {{{\tau}}^{\frac{1}{2}}}{{r}^{-\frac{3}{4}}}+{{{\tau}}^{\frac{1}{8}}} \right){{(\tau{{r}^{3}})}^{\frac{1}{2}-\frac{3}{2m}}}{{e}_{r}}{{(t)}^{\frac{3}{4}}}\\
+\left(\tau{{r}^{-\frac{3}{2}}}+{{{\tau}}^{\frac{1}{4}}} \right){{e}_{r}}{{(t)}^{\frac{3}{2}}},
\end{eqnarray*}
where $\tau=t-c$ and
\[{{\kappa }_{0,r}}=\underset{y\in {{\mathbb{R}}^{3}}}{\mathop{\sup }}\,{{\left\| a \right\|}_{{{L}^{m}}({Q})}},\ \ {{\kappa }_{1,r}}=\underset{y\in {{\mathbb{R}}^{3}}}{\mathop{\sup }}\,{{\left\| f \right\|}_{{{L}^{q_1}}({Q})}},\ \ {{\kappa }_{2,r}}=\underset{y\in {{\mathbb{R}}^{3}}}{\mathop{\sup }}\,{{\left\| g \right\|}_{{{L}^{q_2}}({Q})}},\ \ {Q}={{B}_{2r}}(y)\times(c,t),\]
\[{{\alpha }_{r}}(t)=C{{\kappa }_{1,r}}\left( {\tau^{\frac{1}{3}}}{{r}^{-\frac{1}{2}}}+{\tau^{\frac{1}{12}}} \right){{(\tau{{r}^{3}})}^{\frac{2}{3}-\frac{1}{{{q}_{1}}}}}+C{{\kappa }_{2,r}}\left( 1+{\tau^{\frac{1}{2}}}{{r}^{-1}} \right){{(\tau{{r}^{3}})}^{\frac{1}{2}-\frac{1}{{{q}_{2}}}}},\]
\[{{\beta }_{r}}(t)=C\tau{{r}^{-2}}+C{{\kappa }_{0,r}}\left( {\tau^{\frac{4}{5}}}{{r}^{-\frac{8}{5}}}+1 \right){{(\tau{{r}^{3}})}^{\frac{1}{5}-\frac{1}{m}}},\]
\[{{\gamma }_{r}}(t)=C\left(\tau{{r}^{-\frac{5}{2}}}+{\tau^{\frac{1}{4}}}{{r}^{-1}} \right).\]
\end{prop}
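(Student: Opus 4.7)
The plan is to apply the local energy inequality \eqref{326191} with the test function $\psi_\delta(x,s) = \phi_{r,y}(x)\,\chi_\delta(s)$, where $\chi_\delta$ smoothly approximates $\mathbf{1}_{(c,s_0)}$ for each $s_0 \in (c,t)$, and then pass to the limit $\delta \to 0$. After this step I take the supremum over $y \in \mathbb{R}^3$ and $s_0 \in (c, t)$ to convert the pointwise inequality into one involving $e_r(t)$. The nine source terms on the right-hand side of \eqref{323191} partition naturally by their degree in $u$: degree one ($uf\phi$, $g : \phi\,\nabla u$, $g : u \otimes \nabla\phi$), degree two ($\tfrac{|u|^2}{2}\Delta\phi$, $\tfrac{|u|^2}{2}\,a\cdot\nabla\phi$, $u \otimes a : \phi\,\nabla u$, $u \otimes u : a\cdot\nabla\phi$), and degree three ($\tfrac{|u|^2}{2}\,u\cdot\nabla\phi$ and $p\,u\cdot\nabla\phi$). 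These will produce the $\alpha_r(t)\,e_r(t)^{1/2}$, $\beta_r(t)\,e_r(t)$, and $\gamma_r(t)\,e_r(t)^{3/2}$ contributions, respectively.

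The workhorse tool for the energy estimates is the parabolic Gagliardo--Nirenberg interpolation on cylinders,
\[
\|u\|_{L^q(Q_{2r}(y,s_0))} \le C\, e_r(t)^{\theta_1/2}\,(\tau r^3)^{\theta_2},
\]
with exponents $\theta_1,\theta_2$ dictated by scaling, combined with the trivial bounds $\|\nabla u\|_{L^2(Q_{2r}(y,s_0))} \le e_r(t)^{1/2}$ and $\|u\|_{L^\infty_s L^2_x(Q_{2r}(y,s_0))} \le \sqrt{2 e_r(t)}$. For each of the three groups, Hölder's inequality pairs $\|f\|_{L^{q_1}}$, $\|g\|_{L^{q_2}}$, or $\|a\|_{L^m}$ on $B_{2r}(y)$ against the appropriate Lebesgue norm of $u$ or $\nabla u$, while the pointwise bounds $|\nabla\phi_{r,y}| \lesssim r^{-1}$ and $|\Delta\phi_{r,y}| \lesssim r^{-2}$ deliver the explicit $\tau r^{-2}$, $\tau r^{-5/2}$, etc.\ factors appearing in $\beta_r, \gamma_r$. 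The exact powers of $\tau$ and $r$ in the statement are forced by scaling once the interpolation exponents are chosen, so the bookkeeping here reduces to a term-by-term application of Hölder's inequality.

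For the pressure, I fix $(y, s)$ and take $\bar p(y,s)$ to be the average of $p(\cdot, s)$ on $B_{4r}(y) \setminus B_{2r}(y)$. Using the pressure equation
\[
-\Delta p = \operatorname{div}\operatorname{div}\bigl(u\otimes u + a\otimes u + u\otimes a + g\bigr) - \operatorname{div} f,
\]
I split $p - \bar p$ in $B_{2r}(y)$ into a near-field piece obtained by applying $(-\Delta)^{-1}$ to the right-hand side multiplied by a cutoff supported in $B_{4r}(y)$, and a harmonic far-field correction. Calderón--Zygmund applied to the near-field gives $L^{3/2}$ bounds in terms of $\kappa_{0,r}\,\|u\|_{L^{m'}}$, $\kappa_{1,r}$, $\kappa_{2,r}$, and $\|u\otimes u\|_{L^{3/2}}$, which after Hölder and the same Gagliardo--Nirenberg interpolation as above produce the $e_r(t)^{3/4}$ and $e_r(t)^{3/2}$ contributions with their attendant prefactors. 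The harmonic piece is smooth, so its $L^{3/2}(B_{2r}(y))$ norm is controlled by its mean value on the annulus, which by choice of $\bar p$ is zero; a Poincaré argument then bounds the remainder back in terms of the same $L^{3/2}$ average and is absorbed into the near-field contributions.

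The main obstacle is the pressure estimate: two length scales (the observation radius $r$ and the slightly larger localization radius $4r$) must be tracked simultaneously, and the splitting must be done so that every constant depends only on suprema over $y \in \mathbb{R}^3$ of the stated local norms, since the inequality for $e_r(t)$ requires such uniformity before the supremum in $y$ is taken. Once the decomposition of $p - \bar p$ is in place and the near-field/far-field bounds are assembled, combining them with the term-by-term energy estimates from the second paragraph and taking $\sup_{y, s_0}$ yields both the inequality for $e_r(t)$ and the claimed pressure bound.
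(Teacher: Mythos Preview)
Your treatment of the eight non-pressure terms in the local energy inequality is essentially the paper's argument: the paper also reduces to $e(t)\le e(c^+)+\sup_y k_y(c,t)$ via \autoref{prop21191}, splits $k_y$ into the same nine integrals, and bounds each using H\"older, the Sobolev embedding $H^1\hookrightarrow L^6$, and the interpolations $L^2\cap L^6\hookrightarrow L^3,L^{10/3}$. So that part is fine.

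The pressure step, however, has a genuine gap. Your local near-field/far-field split leaves a harmonic remainder $p_2=p-p_1$ on $B_{2r}(y)$, and you propose to control $p_2-\bar p$ by a Poincar\'e argument with $\bar p$ the annular average of $p$. But the annular mean of $p_2$ is \emph{not} $\bar p$ (you have conflated $p$ and $p_2$), and even if you redefined $\bar p$ as $[p_2]_{\text{annulus}}$, any interior estimate for the harmonic function $p_2$ on $B_{2r}(y)$ requires control of $p_2$ on a larger ball, hence of $\|p\|_{L^{3/2}(B_{3r}(y))}$ itself. That quantity is precisely what the proposition is supposed to bound; it is not among the data $\kappa_{i,r}$, $e_r(t)$, so the argument is circular. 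A single local cutoff cannot produce a bound uniform in $y\in\mathbb R^3$ without some global input.

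The paper avoids this by working with the \emph{global} Riesz-transform representation
\[
p=\mathfrak R(u\otimes u)+\mathfrak R(a\otimes u+u\otimes a)+\mathfrak R(g)+\Delta^{-1}\operatorname{div}f
\]
and invoking \autoref{presbound2}: for a Calder\'on--Zygmund operator $T$ and data $h$ with $\sup_y\|h\|_{L^p(B_r(y))}<\infty$, one has $\|Th-Th(x_0)\|_{L^p(B_r(x_0))}\lesssim \sup_y\|h\|_{L^p(B_r(y))}$. The far-field is handled not by harmonicity and Poincar\'e but by the pointwise kernel decay $|\nabla m(x)|\lesssim|x|^{-n-1}$ summed over dyadic shells covering all of $\mathbb R^3$; this is what makes the bound depend only on the supremum of local norms. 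The paper then takes $\bar p(y,s)=p^{(1)}(y,s)+p^{(2)}(y,s)+p^{(3)}(y,s)+[p^{(4)}(s)]_{B_{2r}(y)}$. To repair your argument you would need to replace the local cutoff by this shell-summation mechanism (or cite \autoref{presbound2} directly).
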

\begin{proof}
We will use the notations $\xi_y$, $\rho_y$, $k_y$ introduced in \autoref{prop21191}. One can assume $c=0$. For convenience, the subscript $r$ in $e_r$, $\phi_{r,y}$, $\kappa_{0,r}$, etc.\ will be dropped during the proof. By Part (i) of \autoref{prop21191}, $e(t)\le e(0^+)+\sup_{y\in\mathbb{R}^3}k_y(0,t)$. It suffices to derive an upper bound of $k_y(0,t)$ that is independent of $y$. Fix $y\in\mathbb{R}^3$. By integration by parts,
\[{{k}_{y}}(0 ,t)=\{1\}+\{2\}+\{3\}+\{4\}+\{5\}+\{6\}+\{7\}+\{8\}+\{9\}\]
where
\[\{1\}=\int_{0 }^{t}{\int_{{{\mathbb{R}}^{3}}}{\frac{|u{{|}^{2}}}{2}\Delta {{\phi }_{y}}dxds}},~~~~~~~\{5\}=\int_{0 }^{t}{\int_{{{\mathbb{R}}^{3}}}{u\otimes a:{{\phi }_{y}}\nabla udxds}},\]
\[\{2\}=\int_{0 }^{t}{\int_{{{\mathbb{R}}^{3}}}{\frac{|u{{|}^{2}}}{2}u\nabla {{\phi }_{y}}dxds}},~~~~~~\{6\}=\int_{0 }^{t}{\int_{{{\mathbb{R}}^{3}}}{u\otimes u:a\nabla {{\phi }_{y}}dxds}},\]
\[\{3\}=\int_{0 }^{t}{\int_{{{\mathbb{R}}^{3}}}{pu\nabla {{\phi }_{y}}dxds}},~~~~~~~~~~~~~~~~~~~~\{7\}=\int_{0 }^{t}{\int_{{{\mathbb{R}}^{3}}}{uf{{\phi }_{y}}dxds}},\]
\[\{4\}=\int_{0 }^{t}{\int_{{{\mathbb{R}}^{3}}}{\frac{|u{{|}^{2}}}{2}a\nabla {{\phi }_{y}}dxds}},~~~~~~~~~~~\{8\}=\int_{0 }^{t}{\int_{{{\mathbb{R}}^{3}}}{g:{{\phi }_{y}}\nabla udxds}},\]
\[\{9\}=\int_{0 }^{t}{\int_{{{\mathbb{R}}^{3}}}{g:u\otimes \nabla {{\phi }_{y}}dxds}}.\]
By standard applications of H\"{o}lder inequality, the Sobolev embedding $H^1\hookrightarrow L^6$, and the fact that $L^2\cap L^6$ is continuously embedded in $L^{10/3}$ and $L^3$, we get
\begin{eqnarray*}
\{1\}&\lesssim&  t{{r}^{-2}}e(t).\\
\{2\}&\le& {{r}^{-1}}\int_{0}^{t}{\int_{{{B}_{2r}}(y)}{\frac{|u{{|}^{3}}}{2}dxds}}\lesssim (t{{r}^{-5/2}}+{{t}^{1/4}}{{r}^{-1}})e{{(t)}^{3/2}}.\\
\{4\},\,\{6\}&\lesssim& \kappa_0\left\| u \right\|_{{{L}^{3}}}^{2}{{r}^{-3/m}}{{t}^{1/3-1/m}}\lesssim \kappa_0\left( {{t}^{2/3}}{{r}^{-1}}+{{t}^{1/6}} \right){{r}^{-3/m}}{{t}^{1/3-1/m}}e(t).\\
\{5\}&\lesssim& ({{r}^{-3/5}}{{t}^{3/10}}+1){{\left\| a \right\|}_{L^{5}}}e(t)\lesssim {{\kappa }_{0}}({{r}^{-3/5}}{{t}^{3/10}}+1){{(t{{r}^{3}})}^{1/5-1/m}}e(t).\\
\{7\}&\le& {{\left\| u \right\|}_{L^{3}}}{{\left\| f \right\|}_{L^{3/2}}}\lesssim {{\kappa }_{1}}({{t}^{1/3}}{{r}^{-1/2}}+{{t}^{1/12}}){{(t{{r}^{3}})}^{2/3-1/{{q}_{1}}}}e{{(t)}^{1/2}}.\\
\{8\}&\le& {{\left\| g \right\|}_{L^{2}}}{{\left\| \nabla u \right\|}_{L^{2}}}\le {{\kappa }_{2}}{{(t{{r}^{3}})}^{1/2-1/{{q}_{2}}}}e{{(t)}^{1/2}}.\\
\{9\}&\lesssim& {{r}^{-1}}{{\left\| g \right\|}_{L^{3/2}}}{{\left\| u \right\|}_{L^{3}}}\lesssim {{\kappa }_{2}}{{r}^{-1}}{{(t{{r}^{3}})}^{2/3-1/{{q}_{2}}}}({{t}^{1/3}}{{r}^{-1/2}}+{{t}^{1/12}})e{{(t)}^{1/2}}.
\end{eqnarray*}
It remains to estimate term \{3\}. Let $R_1$, $R_2$, $R_3$ be the Riesz operators on $\mathbb{R}^3$. Denote $\mathfrak{R}_{ij}=R_i\otimes R_j$. This is a Calder\'{o}n-Zygmund operator with kernel $k(x)=-\nabla^2\Phi$, where $\Phi(x)=-C|x|^{-1}$ is the fundamental solution to the Laplace equation in $\mathbb{R}^3$. Extend $a$, $f$ and $g$ by zero outside of $Q$. The generalized Navier-Stokes equations imply
\[p=\underbrace{\mathfrak{R}(u\otimes u)}_{{{p}^{(1)}}}+\underbrace{\mathfrak{R}(a\otimes u+u\otimes a)}_{{{p}^{(2)}}}+\underbrace{\mathfrak{R}(g)}_{{{p}^{(3)}}}+\underbrace{{{\Delta }^{-1}}\operatorname{div}f}_{{{p}^{(4)}}}.\]
The $L^{3/2}$-norms of the first three terms are estimated thanks to \autoref{presbound2}  as follows.
\begin{equation*}
\label{22191}{{\left\| {{p}^{(1)}}-{{p}^{(1)}}(y,\cdot) \right\|}_{{{L}^{3/2}}(Q)}}\lesssim\underset{z\in {{\mathbb{R}}^{3}}}{\mathop{\sup }}\,{{\left\| u\otimes u \right\|}_{{{L}^{3/2}}(Q)}}\lesssim ({{t}^{2/3}}{{r}^{-1}}+{{t}^{1/6}})e(t).
\end{equation*}
The estimates for $p^{(2)}$ and $p^{(3)}$ are carried out similarly. Note that $\|\nabla p^{(4)}\|_{L^{3/2}_x}=\|f-\mathbb{P}f\|_{L^{3/2}_x}\lesssim {{\left\| f \right\|}_{L^{3/2}_x}}\lesssim r^{3-3/q_1}\|f\|_{L^{q_1}_x}$. By Poincar\'{e} inequality, 
\begin{equation*}\label{22194}{{\left\| {{p}^{(4)}}-{{[{{p}^{(4)}}]}_{{{B}_{2r}}(y)}} \right\|}_{{{L}^{3/2}}(Q)}}\lesssim{{\kappa }_{1}}r{{(t{{r}^{3}})}^{2/3-1/{{q}_{1}}}}.\end{equation*}
Put $\bar{p}(y,s)={{p}^{(1)}}(y,s)+{{p}^{(2)}}(y,s)+{{p}^{(3)}}(y,s)+{{[{{p}^{(4)}}(s)]}_{{{B}_{2r}}(y)}}$. By the estimates of the pressure components above,
\begin{eqnarray}
\nonumber{{\left\| p-\bar{p}(y,\cdot) \right\|}_{{{L}^{3/2}}(Q)}}\lesssim{{\kappa }_{1}}r{{(t{{r}^{3}})}^{\frac{2}{3}-\frac{1}{{{q}_{1}}}}}+{{\kappa }_{2}}{{(t{{r}^{3}})}^{\frac{2}{3}-\frac{1}{{{q}_{2}}}}}+\\
\label{22195}+{{\kappa }_{0}}{{(t{{r}^{3}})}^{\frac{1}{3}-\frac{1}{m}}}({{t}^{\frac{1}{3}}}{{r}^{-\frac{1}{2}}}+{{t}^{\frac{1}{12}}})e{{(t)}^{1/2}}+({{t}^{\frac{2}{3}}}{{r}^{-1}}+{{t}^{\frac{1}{6}}})e(t).
\end{eqnarray}
We estimate \{3\} by the H\"{o}lder inequality $|\{3\}|\lesssim{{\left\| p-\bar{p}(y,\cdot) \right\|}_{L^{3/2}}}{{\left\| u \right\|}_{L^{3}}}{{r}^{-1}}$ and \eqref{22195}.
Combining the estimate of \{1\}, \{2\},\ldots, \{9\}, we get the desired estimate for $e(t)$.
\end{proof}
\begin{cor}\label{prop25192}
Let $M>0$, $r=\frac{\kappa}{M}$ and $\tau=\theta r^2$ where $0<\kappa,\,\theta\le 1$. Let $I=(c,c+\tau)$ for some $c\ge 0$. Suppose $(u,\,p)$ be a mild in solution to (NSE) with $u\in C([c,c+\tau),L^\infty)$. Under conditions \eqref{condensedcond}, \eqref{localc1}, \eqref{localc2}, suppose (NSE)$_\varepsilon$ has a weak solution $(u_\varepsilon,\,p_\varepsilon)$ satisfying \eqref{uecond}. Put $v=u-u_\varepsilon$, $q=p-p_\varepsilon$, and
\[e(t)=\underset{s\in (c,t),\,y\in {{\mathbb{R}}^{3}}}{\mathop{\sup }}\,\left( \int_{{{\mathbb{R}}^{3}}}{\frac{|v(x,s){{|}^{2}}}{2}{{\phi }_{r,y}}(x)}dx+\int_{c }^{s}{\int_{{{\mathbb{R}}^{3}}}{|\nabla v(x,s' ){{|}^{2}}{{\phi }_{r,y}}(x)dxds' }} \right).\]
Then for all $t\in I$,
\begin{equation}e(t)\le e(c^+)+\alpha e{{(t)}^{1/2}}+\beta e(t)+\gamma e{{(t)}^{3/2}}\end{equation}
where
\begin{eqnarray}
\label{328191}\alpha &=&C[(\varepsilon M)^{\nu_1}+(\varepsilon M)^{\sigma_1}]M^{-1/2}\kappa^{1/2},\\ 
\label{328192}\beta &=&C\theta^{1/5},\\
\label{328193}\gamma &=&CM^{1/2}\kappa^{-1/2}.
\end{eqnarray}
Moreover, there exists a function $\bar{q}=\bar{q}(y,s)$ such that \begin{equation*}\underset{y\in {{\mathbb{R}}^{3}}}{\mathop{\sup }}\,\int_{c}^{t}{\int_{{{\mathbb{R}}^{3}}}{|q(x,s)-\bar{q}(y,s){{|}^{3/2}}\phi_{r,y}dxds}}\le \alpha'+\beta'{{e}}{{(t)}^{3/4}}+\gamma'{{e}}{{(t)}^{3/2}}\ \ \forall t\in I
\end{equation*}
where $\alpha'=C[(\varepsilon M)^{\nu_1}+(\varepsilon M)^{\sigma_1}]^{3/2}M^{-2}\kappa^3$, $\beta'=C\kappa^{9/4}M^{-5/4}$, $\gamma'=CM^{-1/2}\kappa^{1/2}\theta^{1/4}$.
\end{cor}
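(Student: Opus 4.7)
The plan is to recognize $(v,q)=(u-u_\varepsilon,p-p_\varepsilon)$ as a suitable weak solution of a generalized Navier--Stokes system on $\mathbb{R}^3\times I$ and then apply \autoref{prop25191} directly. Subtracting (NSE)$_\varepsilon$ from (NSE) and using that both $u$ and $u_\varepsilon$ are divergence-free, the nonlinear term rewrites as $u\nabla u-u_\varepsilon\nabla u_\varepsilon=\operatorname{div}(u_\varepsilon\otimes v+v\otimes u_\varepsilon+v\otimes v)$. Hence $v$ solves
\[
\partial_t v-\Delta v+\operatorname{div}(u_\varepsilon\otimes v+v\otimes u_\varepsilon+v\otimes v)+\nabla q=-f_\varepsilon-\operatorname{div}g_\varepsilon,\quad \operatorname{div}v=0,
\]
which is (GNSE) with $a=u_\varepsilon$, $f=-f_\varepsilon$, $g=-g_\varepsilon$. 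The regularity of the mild solution $u$ on $[c,c+\tau)$ (inherited from $u\in C([c,c+\tau),L^\infty)$) together with \eqref{uecond} gives enough smoothness/integrability on $v$ and $q$ to verify the local energy inequality from \autoref{generalizedsol}, so that \autoref{prop25191} is applicable.

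With the identifications $a=u_\varepsilon$, $f=-f_\varepsilon$, $g=-g_\varepsilon$, the parameters appearing in \autoref{prop25191} are controlled as follows: since $\|u_\varepsilon\|_{L^\infty}\le M$ we can take any $m\ge 5$ and obtain $\kappa_{0,r}\lesssim M(\tau r^3)^{1/m}$; conditions \eqref{localc1}, \eqref{localc2} on a cylinder $B_{2r}(y)\times(c,t)\subset B_{2r}(y)\times(t-r^2,t)$ give
\[
\kappa_{1,r}\lesssim \varepsilon^{\nu_1}r^{\nu_2}M^{\nu_3},\qquad \kappa_{2,r}\lesssim \varepsilon^{\sigma_1}r^{\sigma_2}M^{\sigma_3}.
\]
Substituting $r=\kappa/M$, $\tau=\theta r^2\le r^2$ (with $\kappa,\theta\le 1$), and invoking the scale-matching relations $\nu_1+\nu_2-\nu_3=-3+5/q_1$ and $\sigma_1+\sigma_2-\sigma_3=-2+5/q_2$ of \eqref{condensedcond}, the contributions of the $\kappa_1$- and $\kappa_2$-terms in $\alpha_r(t)$ simplify to a bound of the form $C[(\varepsilon M)^{\nu_1}+(\varepsilon M)^{\sigma_1}]M^{-1/2}\kappa^{1/2}$, giving \eqref{328191}. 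The $\kappa_0$-term in $\beta_r(t)$ becomes $C(\theta^{4/5}+1)\theta^{1/5-1/m}\kappa^{1-5/m}$, which by choosing $m$ large is absorbed into $C\theta^{1/5}$; together with the $\tau r^{-2}=\theta$ contribution this yields \eqref{328192}. Finally, $\gamma_r(t)\lesssim \tau r^{-5/2}+\tau^{1/4}r^{-1}\lesssim M^{1/2}\kappa^{-1/2}$ after substitution, proving \eqref{328193}.

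The pressure bound is handled analogously. Starting from the second inequality in \autoref{prop25191} applied to $q$, each of the four terms $\kappa_{1,r}^{3/2}r^{3/2}(\tau r^3)^{1-3/(2q_1)}$, $\kappa_{2,r}^{3/2}(\tau r^3)^{1-3/(2q_2)}$, $\kappa_{0,r}^{3/2}(\cdots)e(t)^{3/4}$, and $(\tau r^{-3/2}+\tau^{1/4})e(t)^{3/2}$ is reduced by the same substitutions $r=\kappa/M$, $\tau=\theta r^2$ and the scaling identities \eqref{condensedcond}. The first two collapse to $\alpha'$, the third to $\beta'e(t)^{3/4}$ (after taking $m$ large so that $(\tau r^3)^{1/2-3/(2m)}\approx (\tau r^3)^{1/2}$ up to bounded powers of $\kappa,\theta$), and the last to $\gamma'e(t)^{3/2}$. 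The main technical obstacle is the bookkeeping: one must track that all powers of $\varepsilon$ line up with $(\varepsilon M)^{\nu_1}$ and $(\varepsilon M)^{\sigma_1}$, and that the residual powers of $\kappa\le 1$ and $\theta\le 1$ only appear with \emph{positive} exponents in $\alpha,\beta,\alpha',\beta',\gamma'$ (and with the correct sign in $\gamma$), since the smallness of $\theta,\kappa$ will be exploited later to close the iteration. The scale-invariance encoded in \eqref{condensedcond} is exactly what guarantees this cancellation, which is why no artificial smallness assumption on the $\nu_i,\sigma_i,\lambda_i$ is needed.
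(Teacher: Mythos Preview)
Your approach is essentially the paper's: recognize $(v,q)$ as a suitable weak solution of (GNSE) with $a=u_\varepsilon$, $f=-f_\varepsilon$, $g=-g_\varepsilon$, then feed the bounds on $\kappa_{0,r},\kappa_{1,r},\kappa_{2,r}$ into \autoref{prop25191} and simplify using $r=\kappa/M$, $\tau=\theta r^2$ and the scaling identities \eqref{condensedcond}.

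The one point where you diverge is the handling of $m$. You propose taking $m$ large so that residual powers like $\theta^{1/5-1/m}\kappa^{1-5/m}$ behave as $\theta^{1/5}$; the paper instead fixes $m=5$, for which $\kappa_{0,r}=\|u_\varepsilon\|_{L^5(Q)}\lesssim M(\tau r^3)^{1/5}=\theta^{1/5}\kappa$ directly. In fact no limiting in $m$ is needed: since $\kappa_{0,r}\lesssim M(\tau r^3)^{1/m}$, the product $\kappa_{0,r}(\tau r^3)^{1/5-1/m}=M(\tau r^3)^{1/5}=\theta^{1/5}\kappa$ for \emph{every} admissible $m$, and the same cancellation occurs in the pressure term $\kappa_{0,r}^{3/2}(\tau r^3)^{1/2-3/(2m)}$. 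Your intermediate expression $C(\theta^{4/5}+1)\theta^{1/5-1/m}\kappa^{1-5/m}$ drops the factor $M^{5/m}$ coming from $\kappa_{0,r}$ and hence does not quite match; once that factor is restored, the $m$-dependence disappears and you get the clean $\theta^{1/5}\kappa$ bound without any asymptotic argument. This is cosmetic, but it explains why the paper's choice $m=5$ is the natural one rather than a trick.
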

\begin{proof}
By the regularity of $u$ and $u_\varepsilon$, $(v,q)$ is a suitable weak solution to the generalized Navier-Stokes equations
\[\left\{ \begin{matrix}
{{\partial }_{t}}v-\Delta v+\operatorname{div}(u_\varepsilon\otimes v+v\otimes u_\varepsilon+v\,\otimes v)+\nabla q=-f_\varepsilon-\operatorname{div}g_\varepsilon,  \\
\operatorname{div}\,v=0  \\
\end{matrix} \right.{~~~\rm in~~}\mathbb{R}^3\times (c,c+\tau')\]
for all $0<\tau'<\tau$. In \autoref{prop25191}, take $a=u_\varepsilon$ and $m=5$. We have $\kappa_{0,r}\lesssim \theta^{1/5}\kappa$. By \eqref{localc1} and \eqref{localc2},
\begin{eqnarray*}\kappa_{1,r}&\lesssim& {{\varepsilon }^{\nu_1}}r^{\nu_2}{{M}^{\nu_3}}\le (\varepsilon M)^{\nu_1}M^{3-5/q_1},\\
\kappa_{2,r}&\lesssim& {{\varepsilon }^{\sigma_1}}r^{\sigma_2}{{M}^{\sigma_3}}\le (\varepsilon M)^{\sigma_1}M^{2-5/q_2}.\end{eqnarray*} 
Applying these upper bounds for $\kappa_{0,r}$, $\kappa_{1,r}$, $\kappa_{2,r}$ in \autoref{prop25191}, we obtain simple upper bounds $\alpha$, $\beta$, $\gamma$ given by \eqref{328191}-\eqref{328193} for $\alpha_{r}(t)$, $\beta_{r}(t)$, $\gamma_{r}(t)$ respectively.
\end{proof}


\subsection{$\epsilon$-regularity criterion}
Jia and \v Sver\'{a}k proved an $\epsilon$-regularity criterion for the generalized NSE \cite{jiasverak2014}. Here we give a simple generalization of their result in the case the parabolic cylinder $Q_r$ is replaced by $Q_{r,\theta}$, which is needed for our problem.

\begin{lem}\label{prop12281}
Let $f\in L^{q_1}(Q_1)$, $g\in L^{q_2}(Q_1)$, $a\in L^m(Q_1)$ with div\,$a=0$ where $q_1>5/2$, $q_2>5$, $m>5$. Let $(u,p)$ be a suitable weak solution to (GNSE) on $Q_1$. There exists $\varepsilon=\varepsilon(m,q_1,q_2)>0$ such that if \[{{\left\| u \right\|}_{{{L}^{3}}({{Q}_{1}})}}+{{\left\| p \right\|}_{{{L}^{3/2}}({{Q}_{1}})}}+{{\left\| a \right\|}_{{{L}^{m}}({{Q}_{1}})}}+{{\left\| f \right\|}_{{{L}^{{{q}_{1}}}}({{Q}_{1}})}}+{{\left\| g \right\|}_{{{L}^{{{q}_{2}}}}({{Q}_{1}})}}<\varepsilon \] then $u$ is $\alpha$-H\"{o}lder continuous on $Q_{1/2}$ for all
\[0<\alpha<\min \left\{ 2-\frac{5}{{{q}_{1}}},\,1-\frac{5}{{{q}_{2}}},\,1-\frac{5}{m} \right\}.\]
Moreover, $\|u\|_{C^\alpha_{\rm par}(Q_{1/2})}\le C(m,q_1,q_2,\alpha,\varepsilon)$.
\end{lem}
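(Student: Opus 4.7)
The plan is to run a scale-invariant iteration of Caffarelli--Kohn--Nirenberg type, as adapted to generalized Navier--Stokes systems by Jia--\v{S}ver\'ak. Fix a base point $z_0=(x_0,t_0)\in Q_{3/4}$ and introduce the dimensionless quantities
\[
C(r) = \tfrac{1}{r^2}\int_{Q_r(z_0)} |u|^3,\qquad D(r) = \tfrac{1}{r^2}\int_{Q_r(z_0)} \bigl|p - [p]_{B_r(x_0)}\bigr|^{3/2},
\]
\[
A(r) = r^{-1+5/m}\|a\|_{L^m(Q_r(z_0))},\ \ F(r) = r^{-2+5/q_1}\|f\|_{L^{q_1}(Q_r(z_0))},\ \ G(r) = r^{-1+5/q_2}\|g\|_{L^{q_2}(Q_r(z_0))}.
\]
Because $m>5$, $q_1>5/2$, $q_2>5$, the data quantities $A,F,G$ are subcritical, so for every admissible $\alpha<\min\{2-5/q_1,\,1-5/q_2,\,1-5/m\}$ they satisfy $A(r)+F(r)+G(r)\le K\,r^{\alpha}$ uniformly in $z_0\in Q_{3/4}$ and $r\le 1/4$.

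The core step is a one-step decay estimate: there exist $\theta\in(0,1/2)$ and $\epsilon_0>0$, depending on $(\alpha,m,q_1,q_2)$, such that if $C(r)+D(r)+A(r)^3+F(r)^3+G(r)^3\le\epsilon_0$, then
\[
C(\theta r)+D(\theta r)\le \tfrac{1}{2}\,\theta^{2\alpha}\bigl(C(r)+D(r)\bigr)+ K'\, r^{2\alpha}.
\]
To establish this I would apply the local energy inequality \eqref{326191} on $Q_r(z_0)$ to control $C(\theta r)$ by the CKN energy of $u$ on $Q_r$, absorbing the cubic $u\otimes u$ contribution by smallness and estimating the drift and source contributions as subcritical errors of order $r^{2\alpha}$. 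For $D(\theta r)$ I would split $p$ via the Riesz representation used in the proof of \autoref{prop25191} into a near-field piece (bounded by $u\otimes u$, $a\otimes u$ and $g$ on $Q_r$) and a piece that is harmonic in space on $B_{r/2}$; harmonic interior decay at scale $\theta r$ then delivers the gain $\theta^{2\alpha}$. Iterating at the geometric sequence $r_k=\theta^k$ and using the subcritical decay of $A,F,G$ collapses the iteration to $C(r)+D(r)\le K''\, r^{2\alpha}$ uniformly for $z_0\in Q_{3/4}$.

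The Morrey-type decay of $C$, combined with the local energy inequality, yields $u\in L^\infty(Q_{5/8})$. Once $u$ is bounded, I would view the equation as a perturbed Stokes system with coefficient $a\in L^m$ and forcing $f+\operatorname{div}g$, and invoke Schauder-type parabolic estimates (or equivalently the Campanato characterization together with the established Morrey decay of $C(r)+D(r)$) to conclude $u\in C^\alpha_{\text{par}}(Q_{1/2})$ with a bound of the claimed form. The main technical obstacle I anticipate is the nonlocal contribution of the drift $a\otimes u$ to the pressure: since $a$ lies only in $L^m$, the Riesz transform of $a\otimes u$ must be interpolated between the energy $L^{10/3}$-bound for $u$ and $\|a\|_{L^m}$, and the threshold $m>5$ is precisely what is required to absorb this term into $D(r)$ so that the one-step decay closes geometrically and yields the sharp exponent $1-5/m$.
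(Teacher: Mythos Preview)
Your sketch is essentially correct and follows the standard Caffarelli--Kohn--Nirenberg iteration as adapted by Jia--\v{S}ver\'ak, which is precisely what the paper invokes: the paper does not give its own proof of this lemma but simply refers to \cite[Lemma 2.2]{jiasverak2014}. Your outline---scale-invariant quantities $C(r),D(r)$ plus subcritical data terms, one-step decay via the local energy inequality and a near-field/harmonic splitting of the pressure, then iteration and Campanato---is the same scheme carried out there, so there is nothing to compare.
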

We refer the readers to \cite[Lemma 2.2]{jiasverak2014} for a proof of this lemma.

\begin{prop}[$\epsilon$-regularity]\label{prop117191}
Consider functions $f\in L^{q_1}(Q_{r,\theta})$, $g\in L^{q_2}(Q_{r,\theta})$, $a\in L^m(Q_{r,\theta})$ with div $a=0$ where $q_1>5/2$, $q_2>5$, $m>5$, $0<\theta\le 1$ and $r>0$. Let $(u,p)$ be a suitable weak solution to the generalized NSE on $Q_{r,\theta}(z_0)$. Let $\varepsilon>0$ be the constant found in \autoref{prop12281}. If 
	\begin{eqnarray}
\nonumber		&&\frac{1}{{{r}^{2}}}\int_{{{Q}_{r,\theta}}({{z}_{0}})}{\left( |u{{|}^{3}}+|p{{|}^{3/2}} \right)dz}+r^{m-5}\int_{{{Q}_{r,\theta}}({{z}_{0}})}{|a{{|}^{m}}dz}+\\
\label{117191}		&&+{{r}^{3q_1-5}}\int_{{{Q}_{r,\theta}}({{z}_{0}})}{|f{{|}^{q_1}}dz}+{{r}^{2q_2-5}}\int_{{{Q}_{r,\theta}}({{z}_{0}})}{|g{{|}^{q_2}}dz}<\theta\varepsilon
	\end{eqnarray} then $u$ is $\alpha$-H\"{o}lder continuous on $Q_{r/2,\theta}(z_0)$ for all
	\[0<\alpha<\min \left\{ 2-\frac{5}{{{q}_{1}}},\,1-\frac{5}{{{q}_{2}}},\,1-\frac{5}{m} \right\}.\]
	Moreover, \[{{\left\| u \right\|}_{{{L}^{\infty }}({{Q}_{r/2,\theta}}({{z}_{0}}))}}\le \frac{{{C}_{m,{{q}_{1}},{{q}_{2}}}}}{\sqrt{\theta}r}\,;~~~~[u{{]}_{C_{\text{par}}^{\alpha }({{Q}_{r/2,\theta}}({{z}_{0}}))}}\le \frac{{{C}_{m,{{q}_{1}},{{q}_{2}},\alpha }}}{{\theta{r}^{1+\alpha }}}.\]
\end{prop}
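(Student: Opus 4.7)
The plan is to reduce \autoref{prop117191} to \autoref{prop12281} by a translation followed by parabolic rescaling applied to every point of $Q_{r/2,\theta}(z_0)$. The crucial choice is the sub-cylinder radius $\rho := \sqrt{\theta}\,r/2$. For any $z=(x,t)\in Q_{r/2,\theta}(z_0)$, I would first check that $Q_\rho(z)\subset Q_{r,\theta}(z_0)$: spatially, $|x-x_0|<r/2$ and $\rho\le r/2$ yield $B_\rho(x)\subset B_r(x_0)$ (using $\theta\le 1$); temporally, $t\ge t_0-\theta r^2/4$ together with $\rho^2=\theta r^2/4$ gives $t-\rho^2\ge t_0-\theta r^2$. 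This specific choice of $\rho$ is what makes the hypothesis \eqref{117191} (with its factor $\theta$ on the right) exactly match what is needed after rescaling.

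Next, define the standard parabolic rescaling $\tilde u(y,s)=\rho\,u(x+\rho y,\,t+\rho^2 s)$, $\tilde p=\rho^2 p$, $\tilde a=\rho\,a$, $\tilde f=\rho^3 f$, $\tilde g=\rho^2 g$ (at the shifted arguments). Since the generalized NSE and the local energy inequality from \autoref{generalizedsol} are invariant under this scaling, $(\tilde u,\tilde p)$ is a suitable weak solution on $Q_1$. Elementary change-of-variables computations give
\[
\|\tilde u\|_{L^3(Q_1)}^3+\|\tilde p\|_{L^{3/2}(Q_1)}^{3/2}=\frac{1}{\rho^2}\!\int_{Q_\rho(z)}\!\!(|u|^3+|p|^{3/2})\le \frac{4}{\theta r^2}\!\int_{Q_{r,\theta}(z_0)}\!\!(|u|^3+|p|^{3/2}),
\]
together with $\|\tilde a\|_{L^m(Q_1)}^m\le(\sqrt\theta/2)^{m-5}r^{m-5}\int_{Q_{r,\theta}(z_0)}|a|^m$ and analogous estimates for $\tilde f,\tilde g$. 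Since $\theta\le 1$ and $m-5,\,3q_1-5,\,2q_2-5>0$, the prefactors $(\sqrt\theta/2)^{\cdot}$ are bounded. Choosing the implicit constant in \eqref{117191} small enough in terms of the absolute $\varepsilon$ from \autoref{prop12281} then makes all five rescaled norms lie below that $\varepsilon$, so \autoref{prop12281} applies and produces $\|\tilde u\|_{C^\alpha_{\text{par}}(Q_{1/2})}\le C_{m,q_1,q_2,\alpha}$.

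Undoing the scaling gives, for every $z\in Q_{r/2,\theta}(z_0)$,
\[
\|u\|_{L^\infty(Q_{\rho/2}(z))}\le \frac{C}{\rho}=\frac{2C}{\sqrt{\theta}\,r},\qquad [u]_{C^\alpha_{\text{par}}(Q_{\rho/2}(z))}\le \frac{C}{\rho^{1+\alpha}}.
\]
The $L^\infty$ bound extends globally to $Q_{r/2,\theta}(z_0)$ simply by ranging $z$ over that set. To upgrade the Hölder seminorm from local to global on $Q_{r/2,\theta}(z_0)$, I would split into two cases on any pair $z_1,z_2\in Q_{r/2,\theta}(z_0)$: if $d_{\text{par}}(z_1,z_2)<\rho/4$, then (taking $z_2$ to have the later time) both $z_1,z_2\in Q_{\rho/2}(z_2)$ and the local Hölder bound applies; if $d_{\text{par}}(z_1,z_2)\ge\rho/4$, bound $|u(z_1)-u(z_2)|\le 2\|u\|_{L^\infty}$ and use $d_{\text{par}}^\alpha\ge(\rho/4)^\alpha$. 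Either way the Hölder quotient is $\lesssim\rho^{-1-\alpha}$, and since $\theta^{(1+\alpha)/2}\ge\theta$ for $\theta\le 1$, this is $\le C/(\theta r^{1+\alpha})$, matching the claimed bound.

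No serious obstacle is expected — this is a bookkeeping/rescaling argument — but two points deserve care: getting the exponents right in the transformation formulas for the source terms (which forces the peculiar weights $r^{m-5}$, $r^{3q_1-5}$, $r^{2q_2-5}$ in \eqref{117191}), and tracking that the $\theta$ on the right of \eqref{117191} is consumed exclusively by the $u$ and $p$ terms, whereas the $a,f,g$ contributions are automatically small since their prefactors are bounded by $1$.
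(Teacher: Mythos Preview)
Your proposal is correct and follows essentially the same route as the paper: reduce to \autoref{prop12281} via parabolic rescaling on sub-cylinders of radius comparable to $\sqrt{\theta}\,r$, then cover $Q_{r/2,\theta}(z_0)$. The paper is slightly more economical---it takes $\lambda=\sqrt{\theta}\,r$ and only translates spatially (keeping the top time $t_0$ fixed), so that $Q_{\lambda/2}(z_1)$ already spans the full time interval of $Q_{r/2,\theta}(z_0)$ and only a spatial covering is needed---but your choice $\rho=\sqrt{\theta}\,r/2$ with centers ranging over all of $Q_{r/2,\theta}(z_0)$ works just as well.
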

\begin{proof}
Write $z_0=(x_0,t_0)$ and denote $\lambda=\sqrt{\theta}r$. For each $x_1\in\mathbb{R}^3$ such that $|x_1-x_0|\le(1-\sqrt{\theta})r$, put $z_1=(x_1,t_0)$. Note that $Q_{\lambda}(z_1)\subset Q_{r,\theta}(z_0)$. Inequality \eqref{117191} implies
\begin{eqnarray}
\nonumber		&&\frac{1}{{{\lambda}^{2}}}\int_{{{Q}_{\lambda}}({{z}_{1}})}{\left( |u{{|}^{3}}+|p{{|}^{3/2}} \right)dz}+{{\lambda}^{m-5}}\int_{{{Q}_{\lambda}}({{z}_{1}})}{|a{{|}^{m}}dz}+\\
\nonumber		&&+{{\lambda}^{3{{q}_{1}}-5}}\int_{{{Q}_{\lambda}}({{z}_{1}})}{|f{{|}^{{{q}_{1}}}}dz}+{{\lambda}^{2{{q}_{2}}-5}}\int_{{{Q}_{\lambda}}({{z}_{1}})}{|g{{|}^{{{q}_{2}}}}dz}<\varepsilon 
	\end{eqnarray}
By \autoref{prop12281} and the translation and scaling symmetry of the generalized Navier-Stokes equations, $u$ is $\alpha$-H\"{o}lder continuous on $Q_{\rho/2}(z_1)$.
	Moreover, \[{{\left\| u \right\|}_{{{L}^{\infty }}({{Q}_{\lambda/2}}({{z}_{1}}))}}\le \frac{{{C}_{m,{{q}_{1}},{{q}_{2}},\varepsilon }}}{\lambda}\,;~~~~[u{{]}_{C_{\text{par}}^{\alpha }({{Q}_{\lambda/2}}({{z}_{1}}))}}\le \frac{{{C}_{m,{{q}_{1}},{{q}_{2}},\alpha ,\varepsilon }}}{{{\lambda}^{1+\alpha }}}.\]
The proof is then completed by standard covering argument.
\end{proof}

\subsection{Proof of \autoref{mainthm2}}
\noindent The proof uses the following lemma.

\begin{lem}\label{prop25194}
Let $I=(c,c+\tau)$ and $e:I\to\mathbb{R}$ be a continuous nondecreasing function satisfying
\[e(t)\le e(c^+)+\alpha e(t)^{1/2}+\beta e(t)+\gamma e(t)^{3/2}~~~\forall\, t\in I,\]
where $\alpha$, $\beta$, $\gamma>0$. Suppose $\beta<1/2$ and $\alpha\gamma<1/16$. We have the following statements.
\begin{enumerate}[(i)]
\item If $e(c^+)=0$ then $e(c+\tau^-)<16\alpha^2$.
\item If $0<e(c^+)<16\alpha^2$ and $\alpha\gamma<1/64$ then $e(c+\tau^-)<64\alpha^2$.
\item If $16\alpha^2\le e(c^+)\le \frac{1}{256\gamma^2}$ then $e(c+\tau^-)< 4e(c^+)$.
\end{enumerate}
\end{lem}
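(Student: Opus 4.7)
The plan is to prove all three parts by contradiction, following the template of \autoref{prop28191}. In each part I would assume the claimed upper bound fails, use continuity of $e$ together with the intermediate value theorem (or a limit along a sequence $t_n\uparrow c+\tau$, when the supremum is not attained inside $I$) to produce a point $t_0\in I$ at which $e(t_0)$ equals the claimed threshold, substitute $t_0$ into the standing inequality, and extract a strict contradiction from the strict hypotheses $\beta<1/2$ and $\alpha\gamma<1/16$ (or $1/64$ in part~(ii)). The thresholds $16\alpha^2$, $64\alpha^2$, and $4e(c^+)$ are calibrated so that, after dividing by the threshold, the right-hand side splits into pieces summing to strictly less than $1$.

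For (i), substituting $e(t_0)=16\alpha^2$ with $e(c^+)=0$ yields
\[
16\alpha^2 \;\le\; 4\alpha^2+16\beta\alpha^2+64\gamma\alpha^3 \;=\; 4\alpha^2\bigl(1+4\beta+16\alpha\gamma\bigr) \;<\; 16\alpha^2,
\]
a contradiction. Part~(ii) is structurally identical with threshold $64\alpha^2$; the term $e(c^+)<16\alpha^2$ now contributes, and the sharpened bound $\alpha\gamma<1/64$ is exactly what makes $512\gamma\alpha^3<8\alpha^2$, so the four pieces add strictly below $64\alpha^2$. For (iii) I would set $E:=e(c^+)$, choose $t_0$ with $e(t_0)=4E$, substitute, and divide by $E$ to obtain
\[
4 \;\le\; 1 + 2\alpha E^{-1/2} + 4\beta + 8\gamma E^{1/2}.
\]
Here the two-sided hypothesis on $E$ is used in both directions: $E\ge 16\alpha^2$ gives $2\alpha E^{-1/2}\le 1/2$, while $E\le 1/(256\gamma^2)$ gives $8\gamma E^{1/2}\le 1/2$; together with $4\beta<2$ the right-hand side is strictly less than $4$.

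The only real technical point I anticipate is the case where $e(c+\tau^-)$ equals the threshold but is not attained inside $I$; this is handled by evaluating the standing inequality along any sequence $t_n\uparrow c+\tau$, using continuity of $e$, and passing to the limit. The strict inequalities in the hypotheses are preserved in the limit and still deliver the strict contradiction, so no further estimates beyond the given inequality on $e$ are needed.
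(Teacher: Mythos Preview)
Your proposal is correct and is exactly the approach the paper intends: the paper omits the proof, saying only that it ``uses similar arguments as in the proof of \autoref{prop28191},'' which is precisely the contradiction-via-intermediate-value template you carry out in all three parts. Your attention to the open-interval endpoint issue (handling $e(c+\tau^-)$ by passing to the limit along a sequence, with the strict hypotheses on $\beta$ and $\alpha\gamma$ surviving the limit) is a legitimate refinement over the closed-interval setting of \autoref{prop28191} and is correctly argued.
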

Here $e(a^-)=\limsup_{t\to a^-}e(t)$. The proof uses similar arguments as in the proof of \autoref{prop28191} and is omitted.

\begin{proof}[Proof of \autoref{mainthm2}]
Put $v = u-u_\varepsilon$ and $q=p-p_\varepsilon$. As long as $u$ remains bounded on an interval $(0,T')$, $(v,q)$ is a suitable weak solution of the generalized Navier-Stokes system
\[\left\{ \begin{matrix}
{{\partial }_{t}}v-\Delta v+\operatorname{div}(u_\varepsilon\otimes v+v\otimes u_\varepsilon+v\,\otimes v)+\nabla q=-f_\varepsilon-\operatorname{div}g_\varepsilon,  \\
\operatorname{div}\,v=0,  \\
\end{matrix} \right.{~~~\rm in~~}\mathbb{R}^3\times (0,T').\]
Put $r=\frac{\kappa}{M}$ and $\tau=\theta r^2$, where $0<\theta,\kappa< 1$ are small numbers to be chosen later. Let $\bar{C}=C_{m=6,q_1,q_2}>2$ be the constant found in \autoref{prop117191}. Let $C_0\in(0,1/4)$ be the absolute constant identified in \autoref{constantc}. 
Let $N>4\bar{C}^2/C_0$ be an integer. We assume that $\theta$ is chosen such that $K=N\frac{TM^2}{\theta\kappa^2}+1$ is an integer greater than N. For $1\le k\le K$, put $t_k=\frac{(k-1)\tau}{N}$. Assume the relation $\varepsilon\le \delta_2M^{-1}\exp(-\mu_2TM^2)$ where $\delta_2,\,\mu_2>0$ are to be chosen. We show by induction on $N\le j\le K$ that
\begin{enumerate}[~~~~~(a)]
\item The mild solution $u$ exists on $(0,t_j)$ and $\|u\|_{L^\infty(\mathbb{R}^3\times(0,t_j))}<\frac{2\bar{C}}{\sqrt{\theta}\kappa}M$,
\item $e^{(j)}(t_{j})\le 4^{j-N}64\alpha^2$, where
\end{enumerate}
\[e^{(k)}(t)=\underset{s\in (t_{k-N+1},t),\,y\in {{\mathbb{R}}^{3}}}{\mathop{\sup }}\,\left( \int_{{{\mathbb{R}}^{3}}}{\frac{|v(x,s){{|}^{2}}}{2}{{\phi }_{r,y}}}dx+\int_{t_{k-N+1} }^{s}{\int_{{{\mathbb{R}}^{3}}}{|\nabla v(x,s' ){{|}^{2}}{{\phi }_{r,y}}dxds' }} \right)\]
and $\phi_{r,y}$ is the bump function defined by \eqref{funcphi}.

If $\theta\kappa^2\le C_0$ then $u$ remains bounded by $2M$ at least until time $\tau$. 
Choose $\mu_2=\frac{2N}{\kappa^2\theta\rho}$ where $\rho=\min\{\nu_1,\sigma_1,\lambda_1\}$. Let $\beta$, $\gamma$ be given by \eqref{328192}, \eqref{328193} and $\alpha=C[(\varepsilon M)^{\nu_1}+(\varepsilon M)^{\sigma_1}+(\varepsilon M)^{\lambda_1}]M^{-1/2}\kappa^{1/2}$ where $C$ is the same constant as in \eqref{328191}. Then
\[\alpha \gamma =C[(\varepsilon M)^{\nu_1}+(\varepsilon M)^{\sigma_1}+(\varepsilon M)^{\lambda_1}]\le C{\delta_2^{\rho}}\exp \left( -\rho\mu_2T{{M}^{2}} \right)\le C{\delta_2^{\rho}}{{4}^{-K}}.\]
For small $\delta_2$ and $\theta$, we have $\alpha\gamma<1/64$ and $\beta<1/2$. By \eqref{localc3},
\[{{e}_{N}}(0)\le C{{\varepsilon }^{{{\lambda }_{1}}}}{{r}^{{{\lambda }_{2}}}}{{M}^{{{\lambda }_{3}}}}=C{{(\varepsilon M)}^{{{\lambda }_{1}}}}{{M}^{-1}}<16\alpha^2.\]
The last inequality requires the smallness of $\delta_2=\delta_2(\rho)$.
By \autoref{prop25194}, $e^{(N)}(t_N)\le 64\alpha^2$. Therefore, (a) and (b) are true for $j=N$. Now suppose that (a) and (b) are true for some $N\le j=k<K$. Because $\|u(t_k)\|_{L^\infty}\le \frac{2\bar{C}}{\sqrt{\theta}\kappa}M$, $u$ remains bounded at least until time
\[{{t}_{k}}+C_{0}{{\left( \frac{2\bar{C}M}{\sqrt{\theta }\kappa } \right)}^{-2}}>{{t}_{k}}+\frac{\theta {{\kappa }^{2}}}{N{{M}^{2}}}={{t}_{k}}+\frac{\tau }{N}={{t}_{k+1}}.\]
By \autoref{prop25192},
\[e^{(k)}(t)\le e^{(k)}(t_{k-N+1}^{+})+\alpha e^{(k)}{{(t)}^{1/2}}+\beta e^{(k)}(t)+\gamma e^{(k)}{{(t)}^{3/2}}\]
for all $t\in(t_{k-N+1},t_{k+1})$.
By Part (iii) of \autoref{prop21191}, the function $e^{(k+1)}$ can extend to a continuous function on $[t_{k-N+2},t_{k+2})$ with
\[e^{(k+1)}(t_{k-N+2}^{+})=e^{(k+1)}({{t}_{k-N+2}})\le e^{(k)}({{t}_{k-N+2}})\le e^{(k)}({{t}_{k}})\le {{4}^{k-5}}64{{\alpha }^{2}}<\frac{1}{256\gamma^2}.\]
The last inequality is true provided the smallness of $\delta_2=\delta_2(\rho)$. By \autoref{prop25194}, Part (ii) and (iii),
\[{{e}^{(k+1)}}({{t}_{k+1}})<\max \left\{ 64\alpha^2,4{{e}^{(k+1)}}({{t}_{k-N+2}}) \right\}\le \max \left\{ 64\alpha ^2,{{4}^{k-4}}64\alpha ^2 \right\}={{4}^{k-4}}64\alpha^2.\]
Thus, (b) is true for $j=k+1$. It remains to show that $u$ remains bounded by $\frac{2\bar{C}}{\sqrt{\theta}\kappa}M$ on the interval $[t_k,t_{k+1})$. 
We will apply the $\epsilon$-regularity criterion to show that $v$ is bounded by $\frac{\bar{C}M}{\sqrt{\theta}\kappa}$ on $(t_k,t_{k+1})$. Once this is proved, the proof is completed because for any $t\in(t_k,t_{k+1})$,
\[{{\left\| u(t) \right\|}_{{{L}^{\infty }}}}\le {{\left\| {{u}_{\varepsilon }}(t) \right\|}_{{{L}^{\infty }}}}+{{\left\| v(t) \right\|}_{{{L}^{\infty }}}}\le M\left( 1+\frac{{\bar{C}}}{\sqrt{\theta }\kappa } \right)\le \frac{2\bar{C}}{\sqrt{\theta }\kappa }M.\]
Let $\varepsilon_0>0$ be the constant found in \autoref{prop117191}. Fix $x_0\in\mathbb{R}^3$. Denote
$Q'_{s}=B_s(x_0)\times(t_{k+1}-\theta s^2,t_{k+1})$ for any $s>0$.
Note that $Q'_{r/2}=B_{r/2}(x_0)\times (t_{k+1}-\frac{\tau}{4},t_{k+1})\supset B_{r/2}(x_0)\times [t_{k},t_{k+1})$. Let $\bar{q}=\bar{q}(x_0,s)$ be the function found in \autoref{prop25192}. By \autoref{prop117191}, it suffices to show that
\begin{eqnarray}\nonumber&&\underbrace{\frac{1}{{{r}^{2}}}\int_{{{Q}'_{r}}}{\left( |v{{|}^{3}}+|q-\bar{q}(x_0,\cdot){{|}^{3/2}} \right)dz}}_{\{1\}}+\underbrace{r\int_{{{Q}'_{r}}}{|{{u}_{\varepsilon }}{{|}^{6}}dz}}_{\{2\}}\\
\label{116191}&&+\underbrace{{{r}^{3{{q}_{1}}-5}}\int_{{{Q}'_{r }}}{|f_\varepsilon{{|}^{{{q}_{1}}}}dz}}_{\{3\}}+\underbrace{{{r}^{2{{q}_{2}}-5}}\int_{{{Q}'_{r }}}{|g_\varepsilon{{|}^{{{q}_{2}}}}dz}}_{\{4\}}<{{\varepsilon }_{0}}\theta\end{eqnarray}
with the convention that if $q_1=\infty$ (or $q_2=\infty$), we replace $q_1$ (respectively, $q_2$) in \eqref{116191} by any constant greater than $5/2$ (respectively, 5). 
By \eqref{localc1} and \eqref{localc2}, 
\[\{2\}\le C\theta \kappa^6,\ \ \{3\}\le C\kappa^{5/2},\ \ \{4\}\le C\kappa^{5}.\]
Parameters $\theta$ and $\kappa$ are chosen such that $\{2\}, \{3\}, \{4\}<\varepsilon_0\theta/4$.
By Sobolev embedding,
\[\frac{1}{{{r}^{2}}}\int_{{{Q}'_{r }}}{|v{{|}^{3}}dz}\lesssim {{\left( \frac{e^{(k+1)}({{t}_{k+1}})}{r} \right)}^{3/2}}\lesssim \left(\frac{{{4}^{k-4}}64{{\alpha }^{2}}}{r}\right)^{3/2}\lesssim \delta_2^{3\rho}.\]
By \autoref{prop25192},
\begin{eqnarray*}
\frac{1}{{{r}^{2}}}\int_{{{Q}'_{r }}}{|q-\bar{q}({{x}_{0}},\cdot){{|}^\frac{3}{2}}dz}\lesssim \delta_2^{3\rho/2} +{{\left( \frac{e^{(k+1)}({{t}_{k+1}})}{r} \right)}^{3/4}}+{{\left( \frac{e^{(k+1)}({{t}_{k+1}})}{r} \right)}^{3/2}}\lesssim {\delta_2 ^{3\rho/2}}.
\end{eqnarray*}
For $\delta_2=\delta_2(\rho)$ sufficiently small, $\{1\}<{{{\varepsilon }_{0}}\theta }/{4}$.
Therefore, (a) is true for $j=k+1$, which completes the proof of (a) and (b) for all $N\le j\le K$. 
\end{proof}
\section{Appendix}
\subsection{Auxiliary lemma}\label{app}
\begin{lem}\label{presbound2}
	Let $T$ be a Calder\'{o}n--Zygmund operator with kernel $m:\mathbb{R}^n\to\mathbb{R}$ satisfying $|m(x)|\le \beta|x|^{-n}$ and $|\nabla m(x)|\le \beta|x|^{-n-1}$. Let $p\in(1,\infty)$, $q\in[1,\infty]$, $r>0$, $x_0\in\mathbb{R}^n$. Let $(\Omega,\mu)$ be a positive measure. Consider a measurable function $f:\mathbb{R}^n\times\Omega\to\mathbb{R}$, $f=f(x,t)=f_t(x)$ such that $\alpha=\sup_{y\in \mathbb{R}^n}\|f\|_{L^q_tL^p_x(B_r(y)\times\Omega)}<\infty$. Put $\gamma=\|T\|_{L^p\to L^p}$. Then
	\[{{\left\| Tf_t-Tf_t({{x}_{0}}) \right\|}_{L^q_t{{L}^{p}_x}({{B}_{r}}({{x}_{0}})\times\Omega)}}\le C_{n,p}\alpha(\gamma+\beta). \]
\end{lem}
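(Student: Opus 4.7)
The plan is to reduce to the classical BMO-type estimate for Calder\'on--Zygmund operators: split $f_t = f_{1,t} + f_{2,t}$ with $f_{1,t} = f_t \mathbf{1}_{B_{2r}(x_0)}$, handle $f_{1,t}$ by $L^p$-boundedness, and handle $f_{2,t}$ by exploiting the smoothness of $m$ away from the origin. Since $Tf_t$ need not have a pointwise value at $x_0$, I interpret the quantity $Tf_t(x_0)$ as a convenient constant $c_t = \int_{|y-x_0|>2r} m(x_0-y)\,f_t(y)\,dy$, which is well-defined pointwise (no singularity on the domain of integration), and prove the stated bound with $Tf_t(x_0)$ replaced by this $c_t$; in the applications to pressure estimates this is all that is needed.

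For the local piece, by the $L^p$-boundedness of $T$, $\|Tf_{1,t}\|_{L^p_x(B_r(x_0))} \le \gamma \|f_t\|_{L^p(B_{2r}(x_0))}$. Cover $B_{2r}(x_0)$ by $C_n$ balls $B_r(y_j)$; the $L^p$-triangle inequality and Minkowski's integral inequality in the $t$-variable give $\|f\|_{L^q_t L^p_x(B_{2r}(x_0)\times\Omega)} \le C_n \alpha$ (done case-by-case for $q \ge p$ and $q \le p$). Hence $\|Tf_1\|_{L^q_t L^p_x(B_r(x_0)\times\Omega)} \le C_n \gamma \alpha$.

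For the far-field piece, note that for $x \in B_r(x_0)$ and $|y-x_0| > 2r$ the mean value theorem and the bound $|\nabla m| \le \beta |\cdot|^{-n-1}$ give $|m(x-y)-m(x_0-y)| \le C_n\beta\,|x-x_0|\,|y-x_0|^{-n-1}$. Decompose $\{|y-x_0|>2r\}$ into dyadic annuli $A_k = \{2^k r < |y-x_0| \le 2^{k+1} r\}$, apply H\"older in $y$ on each $A_k$ (using $|A_k|^{1-1/p} \le C_n (2^k r)^{n-n/p}$), and integrate $|x-x_0|$ over $B_r(x_0)$ in $L^p$ to obtain
\[
\|Tf_{2,t}(\cdot)-c_t\|_{L^p_x(B_r(x_0))} \;\le\; C_n\beta\sum_{k\ge 1} 2^{-k(1+n/p)} \|f_t\|_{L^p(A_k)} .
\]
Taking $L^q_t$ term by term and using Minkowski (again split into $q\ge p$ and $q\le p$, covering $A_k$ by $\lesssim 2^{kn}$ balls of radius $r$) yields $\|f\|_{L^q_t L^p_x(A_k\times\Omega)} \le C_n 2^{kn/p} \alpha$; the two powers of $2^k$ combine to $2^{-k}$, and the geometric series sums to $C_{n,p}\beta\alpha$.

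Combining the two estimates gives $\|Tf_t - c_t\|_{L^q_t L^p_x(B_r(x_0)\times\Omega)} \le C_{n,p}(\gamma+\beta)\alpha$. The main obstacle I anticipate is bookkeeping the mixed-norm $L^q_t L^p_x$ over the dyadic annuli: one must justify pulling the $L^q_t$ norm through the series and through the $L^p_x$-triangle inequality uniformly in the regime $q<p$ and $q\ge p$, which is precisely where Minkowski's integral inequality (applied in the correct direction in each case) does the work. Once this is done carefully the geometric decay $2^{-k}$ makes the rest routine.
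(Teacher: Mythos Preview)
Your near/far decomposition is exactly the paper's strategy, and the treatment of the local piece and of the constant $c_t$ is fine. The gap is in the far-field bookkeeping for $q<p$. Your claimed covering estimate $\|f\|_{L^q_tL^p_x(A_k\times\Omega)}\le C_n\,2^{kn/p}\alpha$ is simply false when $q<p$: if $f$ lives on a different one of the $N\sim 2^{kn}$ balls at different $t$-values, the ratio $\|f\|_{L^q_tL^p_x(A_k)}/\alpha$ is exactly $N^{1/q}$, not $N^{1/p}$. Minkowski ``in the other direction'' does not rescue this, because swapping to $L^p_xL^q_t$ gives a norm you cannot compare back to $\alpha$. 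With the honest bound $2^{kn/q}$ your series becomes $\sum_k 2^{-k(1+n/p-n/q)}$, which diverges once $n(1/q-1/p)\ge 1$ (e.g.\ $n=3$, $q=1$, $p$ large), so the argument collapses precisely in the regime you flagged as the ``main obstacle.''

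The fix is an order-of-operations change: cover $A_k$ by the $N_k$ balls \emph{before} applying H\"older, i.e.\ estimate $\int_{A_k}|y-x_0|^{-n-1}|f_t|\le C(2^kr)^{-n-1}\sum_j |B_r|^{1/p'}\|f_t\|_{L^p(B_r(y_j))}$. Now the pointwise bound is already a sum of ball-localized $L^p_x$ norms, so after taking $L^p_x(B_r(x_0))$ and then $L^q_t$ (plain triangle inequality in the countable sum) each term is directly $\le\alpha$, and the double sum gives $\sum_k 2^{-k(n+1)}N_k\alpha=\sum_k 2^{-k}\alpha$, valid for every $q\in[1,\infty]$ with a constant independent of $q$. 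The paper achieves the same effect by a slightly different device: it uses \emph{thin} shells $S_k$ of width $r/2$ (linear, not dyadic, spacing), so that the kernel contributes $k^{-(n+1)}$ while each $S_k$ needs only $\sim k^{n-1}$ balls; the crude triangle-inequality bound $\|f\|_{L^q_tL^p_x(S_k)}\le C_{n,p}k^{n-1}\alpha$ then already yields a convergent $\sum k^{-2}$.
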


\begin{proof}
Let $\phi:\mathbb{R}^n\to\mathbb{R}$ be a smooth cut-off function supported on $B_{4r}(x_0)$ and equal to 1 on $B_{2r}(x_0)$. Decompose $Tf$ as follows:
\[Tf(x)=\int_{{{\mathbb{R}}^{n}}}{m(x-y)f(y)dy}=\underbrace{T(\phi f)}_{\{1\}}+\underbrace{T((1-\phi) f)}_{\{2\}}.\]
By the boundedness of $T$, 
${{\left\| \{1\} \right\|}_{L_{t}^{q}L_{x}^{p}({{\mathbb{R}}^{n}}\times \Omega )}}\le \gamma {{\left\| \phi f \right\|}_{L_{t}^{q}L_{x}^{p}({{\mathbb{R}}^{n}}\times \Omega )}}\le {{C}_{n}}\gamma \alpha$. Denote by $S_k$ the spherical shell with inner radius $\frac{k}{2}r$ and outer radius $\frac{k+1}{2}r$. Then
\[|\{2\}-T{{f}_{t}}({{x}_{0}})|\le \frac{{{C}_{n,p}}\beta }{{{r}^{n/p}}}\sum\limits_{k=4}^{\infty }{\frac{1}{{{k}^{n+1}}}{{\left\| {{f}(t)} \right\|}_{{{L}^{p}}({{S}_{k}})}}}.\]
Because $S_k$ can be covered by $C_nk^{n-1}$ balls of radius $r$ (see \cite{MR2105755}), ${{\left\| f \right\|}_{L_{t}^{q}L_{x}^{p}({{S}_{k}}\times \Omega )}}\le {{C}_{n,p}}{{k}^{n-1}}\alpha $. This estimate completes the proof.
\end{proof}

\subsection{Regularized NSE with $[u\nabla u]_\varepsilon=(u*\eta_\varepsilon)\nabla u$}\label{mol1}
With the initial data $u_0\in L^\infty$, suppose the mollified Navier-Stokes equations 
\begin{equation}\label{mollified}{{\partial _t}u - \Delta u + (u*\eta_\varepsilon) \nabla u + \nabla p = 0},\ \ \text{div}\,u=0\end{equation}
have a mild solution $u_\varepsilon\in L^\infty(\mathbb{R}^3\times(0,T))$ bounded by $M$.
Then (\cite[Thm. 4.1]{koch2009})
\[{{\left\| u_\varepsilon(t)*{{\eta }_{\varepsilon }}-u_\varepsilon(t) \right\|}_{L^{\infty }}}\le \varepsilon {{\left\| \nabla u_\varepsilon(t) \right\|}_{L^{\infty }}}\lesssim \varepsilon \max \left\{ {{M}^{2}},{M}/{\sqrt{t}} \right\}.\]
Recall that \eqref{mollified}
is a special case of (NSE)$_\varepsilon$ where $u_{0\varepsilon}=u_0$, $f_\varepsilon=0$ and $g_\varepsilon=(u_\varepsilon-u_\varepsilon*\eta_\varepsilon)\otimes u_\varepsilon$. We have ${{\left\| {{g}_{\varepsilon }}(t) \right\|}_{L^{\infty }}}\lesssim \min \left\{ {{M}^{2}},\max \left\{ \varepsilon {{M}^{3}},{\varepsilon {{M}^{2}}}/{\sqrt{t}} \right\} \right\}$. Then
\begin{eqnarray}
\label{219211}{{\left\| Gg_{\varepsilon }^{\sigma }(t) \right\|}_{L^{\infty }}}&\lesssim& \int_{0}^{t}{\frac{1}{\sqrt{t-s}}{{\left\| {{g}_{\varepsilon }}(s+\sigma ) \right\|}_{L^{\infty }}}ds}\\
\nonumber&\lesssim& \int_{0}^{{{M}^{-2}}}{\frac{1}{\sqrt{t-s}}\frac{\varepsilon {{M}^{2}}}{\sqrt{s}}ds}+\int_{{{M}^{-2}}}^{t}{\frac{1}{\sqrt{t-s}}\varepsilon {{M}^{3}}ds}\lesssim \varepsilon {{M}^{2}}.
\end{eqnarray}
Thus, condition \eqref{globalc} is satisfied.
By \eqref{28195} and \autoref{prop28191}, $\|u(t_0)-u_\varepsilon(t_0)\|_{L^\infty}\lesssim \varepsilon M^2$ for some $t_0\gtrsim M^{-2}$. The conditions \eqref{condensedcond}-\eqref{localc3} are satisfied with $t_0\gtrsim M^{-1}$ as the starting time and $q_2=\infty$, $\sigma_1=\lambda_1=1$, $\sigma_2=0$, $\lambda_2=3/2$, $\sigma_3=3$, $\lambda_3=2$.

\subsection{Regularized NSE with $[u\nabla u]_\varepsilon=P_\varepsilon(u \nabla u)$}\label{mol2}
With the initial data $u_0\in L^\infty$, suppose the regularized Navier--Stokes equations
\begin{equation}\label{regularized}{{\partial _t}u - \Delta u + P_{\varepsilon}(u\nabla u) + \nabla p = 0},\ \ \text{div}\,u=0\end{equation}
have a mild solution $u_\varepsilon\in L^\infty(\mathbb{R}^3\times(0,T))$ bounded by $M$. Recall that \eqref{regularized} is a special case of (NSE)$_\varepsilon$ where $u_{0\varepsilon}=u_0$, $f_\varepsilon=0$ and $g_\varepsilon=(Id-P_\varepsilon)(u_\varepsilon\otimes u_\varepsilon)$. One can rewrite \eqref{regularized} as $u_\varepsilon=e^{t\Delta} u_0-FP_\varepsilon(u_\varepsilon\nabla u_\varepsilon)$. Standard $L^1_x$ estimates on the kernel of the operator $\nabla FP_\varepsilon$ (similar  to that of $\nabla F$) yield
\[{{\left\| \nabla {{u}_{\varepsilon }}(t) \right\|}_{{{L}^{\infty }}}}\lesssim \frac{{{\left\| {{u}_{0}} \right\|}_{{{L}^{\infty }}}}}{\sqrt{t}}+\int_{0}^{t}{\frac{\left\| {{u}_{\varepsilon }}(s) \right\|_{L^\infty}\left\| \nabla {{u}_{\varepsilon }}(s) \right\|_{L^\infty}}{\sqrt{t-s}}ds}.\]
This leads to $\|\nabla u_\varepsilon(t)\|_{L^\infty}\lesssim M/\sqrt{t}$. Consequently, \[\|g_\varepsilon(t)\|_{L^\infty}\lesssim \varepsilon\|\nabla(u_\varepsilon\otimes u_\varepsilon)\|_{L^\infty}\lesssim \varepsilon M^2/\sqrt{t}.\]
Using the estimate \eqref{219211}, one gets ${{\left\| Gg_{\varepsilon }^{\sigma }(t) \right\|}_{{{L}^{\infty }}}}\lesssim \varepsilon {{M}^{2}}$. Thus, condition \eqref{globalc} is satisfied. The conditions \eqref{condensedcond}-\eqref{localc3} are also satisfied with $t_0\gtrsim M^{-2}$ as the starting time and with the same values for $\sigma_1$, $\sigma_2$, \ldots as above.
\section*{Acknowledgment}
The author would like to thank Prof. Vladim\'{i}r \v{S}ver\'{a}k for his valuable comments.
\section*{Conflict of interest}
 The author declares that he has no conflict of interest.
\bibliographystyle{spmpsci}
\bibliography{References}
\end{document}